\documentclass[12pt]{amsart}
\usepackage{graphicx} 

\usepackage{amsmath}
\usepackage{amssymb}
\usepackage{amsthm}
\usepackage{xcolor}
\usepackage{hyperref}
\numberwithin{equation}{section}
\usepackage[all]{xy}

\newcommand{\ra}{\rightarrow}

\newcommand{\bF}{\mathbb{F}}
\newcommand{\bG}{\mathbb{G}}

\newcommand{\bP}{\mathbb{P}}
\newcommand{\bQ}{\mathbb{Q}}

\newcommand{\bZ}{\mathbb{Z}}

\newcommand{\cC}{\mathcal{C}}
\newcommand{\cF}{\mathcal{F}}

\newcommand{\cO}{\mathcal{O}}
\newcommand{\cP}{\mathcal{P}}
\newcommand{\cQ}{\mathcal{Q}}

\newcommand{\fS}{\mathfrak{S}}

\newcommand{\oM}{\overline{M}}

\newcommand{\Aut}{\mathrm{Aut}}
\newcommand{\Bl}{\operatorname{Bl}}

\newcommand{\GL}{\mathrm{GL}}

\newcommand{\pardeg}{\mathrm{pardeg}}
\newcommand{\PGL}{\mathrm{PGL}}
\newcommand{\Pic}{\mathrm{Pic}}

\newcommand{\SL}{\mathrm{SL}}

\newcommand{\Spin}{\operatorname{Spin}}

\theoremstyle{plain}
\newtheorem{prop}{Proposition}

\newtheorem{theo}[prop]{Theorem}
\newtheorem{coro}[prop]{Corollary}

\theoremstyle{definition}
\newtheorem{defi}[prop]{Definition}

\newtheorem{ques}[prop]{Question}

\theoremstyle{remark}
\newtheorem{rema}[prop]{Remark}
\newtheorem{exam}[prop]{Example}

\title{Moduli of conic surfaces over the line}
\author{Brendan Hassett}
\email{brendan\underline{ }hassett@brown.edu}
\author{Amanda Hernandez}
\email{amanda\underline{ }hernandez@brown.edu}
\address{Department of Mathematics, Brown University, Providence, Rhode Island, USA}
\date{July 13, 2026}

\begin{document}

\begin{abstract}
We study moduli spaces and monodromy groups for surfaces 
fibered in conics over the projective line, with a view toward
explicit presentations and arithmetic applications.
\end{abstract}

\maketitle

The last decade has seen major advances in our understanding
of moduli spaces of Fano varieties through new techniques 
like K-stability. del Pezzo surfaces are an important guiding
example -- see \cite{OSS}, for instance -- where modern
moduli spaces may be related to approaches via
Geometric Invariant Theory (GIT). Surfaces with
conic bundle structures, beyond the del Pezzo case, 
have received less attention.  Some singular del Pezzo
surfaces also come with natural conic fibrations
e.g. degree-$2a$ hypersurfaces in $\bP(1,1,a,a)$ \cite{LP}.
Threefolds with conic fibrations over the plane have been
studied in specific cases \cite{DJKQ}. 

Here we will focus on surfaces fibered in conics over $\bP^1$.
These have long been studied with a view
toward Diophantine problems e.g.~\cite[\S 2.6]{CTSansuc}. 
Our target application is 
over finite fields, so we avoid using characteristic-zero 
or analytic techniques. We focus on explicit constructions
of parameter spaces for these varieties, focusing on 
presentations where sampling can be done quickly and efficiently.
A forthcoming paper by the second author \cite{Hernandez} 
will analyze the statistical behavior of rationality 
for these varieties over large finite fields. 

Section~\ref{sect:BCB} presents conic bundles and 
their classification, focusing both on equations and
inductive structures under birational maps. We turn
to the Picard group in Section~\ref{sect:LWG}, presenting
Weyl-group symmetries arising from the intersection form.
Moduli spaces, and associated birational transformations,
are the subject of Section~\ref{sect:MCTM}:
Theorem~\ref{theo:modulistack} is our main result; Corollary~\ref{coro:monodromy} gives its applications to monodromy.
Sections \ref{sect:RWPB} and \ref{sect:MCI} link our
perspective with other moduli frameworks: Parabolic bundles
and complete intersections.

\subsection*{Acknowledgments:} We are grateful to 
Ana-Maria Castravet and Kaiqi Yang for conversations
about this project.
The first author received partial support from
U.S. National Science Foundation award 2424556
and Simons Foundation Award 546235. The second author received partial support from U.S. Department of Education Graduate Assistance in Areas of National Need (GAANN) award P200A240015-25.

\section{Background on conic bundles}
\label{sect:BCB}

Let $k$ be a field of characteristic different from two.

\subsection{Basic notions}

\begin{defi} \label{defi:goodconic}
A {\em good conic bundle over $\bP^1$} consists of a 
smooth projective surface $X$ and a dominant morphism
$\phi:X \ra \bP^1$ such that:
\begin{enumerate}
\item{the geometric generic fiber of $\phi$ is $\bP^1$:}
\item{non-smooth fibers are isomorphic to two 
copies of $\bP^1$ meeting in a node.}
\end{enumerate}
\end{defi}
Being ``good'' is an open condition for families of such conic bundles.
\begin{prop}
The enumerated conditions of Definition~\ref{defi:goodconic}
are equivalent to
\begin{enumerate}
\item[1'.]{the geometric generic fiber of $\phi$ is
smooth and connected;}
\item[2'.]{$\omega_{\phi}^{-1}$ is ample relative to $\phi$.}
\end{enumerate}
Furthermore, $\omega_{\phi}^{-1}$ is very ample and
globally generated relative to $\phi$, inducing an embedding
$$
\xymatrix{ X \ar@{^{(}->}[rr] \ar[rd]_{\phi} &  &\bP((\phi_*\omega_{\phi}^{-1})^{\vee}) \ar^
{\varpi}[ld] \\
 &\bP^1 &
}
$$
where $\pi$ is a $\bP^2$-bundle over $\bP^1$.
The discriminant divisor of $\phi$ is reduced.  
\end{prop}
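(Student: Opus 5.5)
I will prove the two implications separately, working fiber by fiber over $\bar k$ (the conditions are geometric), then deduce the embedding and the reducedness of the discriminant.

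\emph{(1,2) $\Rightarrow$ (1',2').} Condition 1' is immediate from 1. For 2', relative ampleness can be checked on fibers. On a smooth fiber $F\cong\bP^1$ we have $\omega_\phi|_F=\omega_F$ of degree $-2$. On a singular fiber $F=C_1\cup C_2$, each $C_i$ is a smooth rational curve on the smooth surface $X$ with $F\cdot C_i=0$ and $C_1\cdot C_2=1$, hence $C_i^2=-1$. Adjunction gives $K_X\cdot C_i=-1$, and since $K_\phi=K_X-\phi^*K_{\bP^1}$ restricts to $K_X$ on fibers, $\omega_\phi^{-1}$ has degree $1$ on each component. So it is ample on every fiber.

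\emph{(1',2') $\Rightarrow$ (1,2).} Since $X$ is smooth and $\bP^1$ is a smooth curve, $\phi$ is flat, the fibers are Cohen--Macaulay curves, and $\omega_\phi$ is invertible with arithmetic genus constant in the family. By 1' the generic fiber is a smooth connected genus-$0$ curve, so every fiber $F$ has $p_a(F)=0$, and $\deg\omega_F^{-1}=2$. Also $h^0(\cO_F)=1$, by Zariski connectedness and $\phi_*\cO_X=\cO_{\bP^1}$, which holds because the generic fiber is geometrically connected and $X$ is normal. Writing $F=\sum m_iC_i$, we get $\sum m_i\,(-K_X\cdot C_i)=2$ with each term positive by 2'. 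Hence either $F$ is integral, or $F=2C$, or $F=C_1+C_2$ with $-K_X\cdot C_i=1$.
\begin{itemize}
\item If $F$ is integral of arithmetic genus $0$, then $F\cong\bP^1$.
\item If $F=2C$, then $C^2=0$ and $K_X\cdot C=-1$, so $2p_a(C)-2=-1$, which is odd and impossible.
\item If $F=C_1+C_2$, then $C_i^2=-C_1\cdot C_2$ and $2p_a(C_i)-2=C_i^2-1$. The bound $p_a(C_i)\ge 0$ forces $C_i^2=-1$, $C_1\cdot C_2=1$ and $p_a(C_i)=0$. So each $C_i\cong\bP^1$ and they meet transversally in one point, giving a node.
\end{itemize}
The parity argument ruling out $F=2C$ is the step I expect to need the most care, together with justifying flatness and constancy of $p_a$ without characteristic-zero tools. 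Neither uses $\mathrm{char}\neq 2$.

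\emph{Embedding.} Each fiber $F$ is a plane conic embedded by $\omega_F^{-1}$, whose degree is $2$ and $h^1=0$. By cohomology and base change, $\phi_*\omega_\phi^{-1}$ is locally free of rank $3$ and its formation commutes with base change. Surjectivity of $H^0(F,\omega_F^{-1})$ onto the fiberwise global sections, plus very ampleness on every fiber (a smooth conic, or a nodal pair of lines embedded as a line pair in $\bP^2$), gives relative global generation and a closed embedding into the $\bP^2$-bundle $\varpi$ over $\bP^1$.

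\emph{Discriminant.} Locally over $\bP^1$ the surface $X$ is a conic $q(t)=\sum a_{ij}(t)x_ix_j$ in $\bP^2_{k[t]}$, and the discriminant is $\det(a_{ij})$. At a nodal fiber over $t=0$, diagonalize so that $q=x_0^2-x_1^2+a(t)x_2^2$ with $a(0)=0$; this uses $\mathrm{char}\neq 2$. The surface is singular at the node $[0:0:1]$ unless $a'(0)\ne 0$. Since $X$ is smooth, $t$ divides $\det=-a(t)$ exactly once, so the discriminant divisor is reduced.
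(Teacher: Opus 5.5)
Your argument is correct, but its core step differs from the paper's. For $(1'),(2')\Rightarrow(1),(2)$ the paper cites the Stacks Project lemma that a Gorenstein genus-zero curve $C$ with $\Gamma(\cO_C)=k$ is a plane conic. So each fiber is a smooth conic, a line pair, or a double line, and the paper then discards the double line because it would force a singular point on $X$; this is tied to the reducedness of the discriminant, which the paper only asserts.

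You instead classify fibers numerically on the smooth surface. The condition $-K_X\cdot F=2$, with every component contributing positively, leaves only $F$ integral, $F=2C$, or $F=C_1+C_2$. The parity of $C^2+K_X\cdot C=2p_a(C)-2$ then eliminates $F=2C$. It also eliminates $C_1\cdot C_2=0$, and you should say so explicitly, since $p_a(C_i)\ge 0$ alone only gives $C_i^2\ge -1$.

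What each route buys:
\begin{itemize}
\item Your route is self-contained and characteristic-free. It also avoids the hypothesis $\Gamma(\cO_F)=k$, which the cited lemma requires and which is not automatic for a possibly non-reduced fiber.
\item The paper's route delivers the plane-conic embedding of every fiber at once, which feeds directly into cohomology and base change.
\item Your local computation supplies an actual proof that the discriminant is reduced. To normalize the units to $\pm 1$, work over $k[[t]]$ rather than $k[t]_{(t)}$.
\end{itemize}

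One small point: Zariski connectedness does not by itself give $h^0(\cO_F)=1$ for a non-reduced fiber. You never use this, however, since $-K_X\cdot F=2$ already follows from $\chi(\cO_F)=1$ or from numerical equivalence of fibers.
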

\begin{proof}
Keep in mind that $\phi$ is flat and projective.

Assume (1) and (2). Condition (1') is a weakening of
(1). Our assumptions imply that, on each fiber $X_p=\phi^{-1}(p)$, global sections of $\omega^{-1}_{X_p}$ imbed 
$X_p$ as a conic in $\bP^2$; further, 
$\omega^{-1}_{X_p}$ has no higher 
cohomology. By cohomology-and-base-change, we see that
$\omega_{\phi}^{-1}$ is very ample and globally generated
relative to $\phi$, inducing an embedding as a conic divisor in a
$\bP^2$-bundle. Condition (2') follows.

Assume (1') and (2'). A smooth projective
geometrically-connected curve with ample anti-canonical
classes is necessarily a smooth conic. 
A Gorenstein projective curve $C$ with $\Gamma(\cO_C)=k$
is a plane curve of degree two \cite[Lem.~53.10.3, Tag~0C6N]{stacks-project}. Of course, a Cartier divisor in a smooth
surface is Gorenstein. The case of a double line is
precluded, as such a fiber could only occur for singular $X$;
indeed, the smoothness of $X$ guarantees that the discriminant
of $\phi$ is reduced.  
The cases enumerated in (1) and (2) are the remaining
possibilities.
\end{proof}

\begin{defi}
Consider a good conic bundle $\phi:X \ra \bP^1$ with
$$(\phi_*\omega_{\phi}^{-1})^{\vee} 
\simeq \cO_{\bP^1}(a_0)  \oplus
\cO_{\bP^1}(a_1) \oplus \cO_{\bP^1}(a_2), \quad a_0 \le a_1 \le a_2.$$
The triple $(a_0,a_1,a_2) \in \bZ^3$ is the {\em
splitting type} of the bundle; it is
{\em balanced} if $a_2-a_0 \le 1$.  
\end{defi}
Being ``balanced'' is an open condition on
conic bundles, as generic vector
bundles on $\bP^1$ have this property. 

\begin{prop} \label{prop:disc}
Let $\phi:X \ra \bP^1$ be a good conic bundle of splitting type
$(a_0,a_1,a_2)$. Then the discriminant of 
$\phi$ has degree $a_0+a_1+a_2$
and 
$$\chi(X,T_X) = 6 - 2(a_0+a_1+a_2).$$
\end{prop}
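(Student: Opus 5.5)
Let $E=(\phi_*\omega_\phi^{-1})^\vee \simeq \cO(a_0)\oplus\cO(a_1)\oplus\cO(a_2)$ and $P=\bP(E)\xrightarrow{\varpi}\bP^1$. By the previous proposition, $X\subset P$ is a relative conic. The plan is to write down its divisor class, read off the discriminant degree as the degree of a determinant, and obtain $\chi(T_X)$ from Riemann--Roch on the smooth surface $X$, i.e.\ from $K_X^2$ and $e(X)$.

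\emph{Class and discriminant.} With $\xi=\cO_P(1)$ and $f$ the fiber class, $X\in|2\xi+mf|$ for some $m$, so $X$ is the zero locus of a section $q$ of $\mathrm{Sym}^2 E^\vee\otimes\cO(m)$ (with the paper's convention that $\cO_P(1)$ pushes forward to $E^\vee$). To pin down $m$, apply adjunction: $\omega_P=\cO_P(-3)\otimes\varpi^*(\det E^\vee\otimes\omega_{\bP^1})$ in this convention, so
$$\omega_\phi=(\omega_{P/\bP^1}\otimes\cO_P(X))|_X=\cO_X(-1)\otimes\phi^*(\cO(m)\otimes\det E^\vee).$$
Since $\omega_\phi^{-1}=\cO_X(1)$ by construction, we get $\cO(m)=\det E$, i.e.\ $m=a_0+a_1+a_2$. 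The discriminant is the vanishing of $\det q$, a section of
$$(\det E^\vee)^{2}\otimes\cO(3m)=\cO(-2m+3m)=\cO(m),$$
so it has degree $a_0+a_1+a_2$. It is a nonzero section because the generic fiber is smooth, and by the earlier proposition it is reduced, so this degree also counts the singular fibers geometrically. I expect the main obstacle to be getting these sign conventions right; the degree computation must produce a nonnegative number, which serves as a check.

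\emph{Euler characteristic of $T_X$.} Let $d=m$ be the number of singular fibers. Then
$$e(X)=e(\bP^1)\,e(\bP^1)+d\,(3-2)=4+d,$$
since each nodal fiber has Euler characteristic $3$. Noether's formula gives $\chi(\cO_X)=1$ (the surface is rational over $\bar k$, or directly $(K^2+e)/12$), hence
$$K_X^2=12-e=8-d.$$
Hirzebruch--Riemann--Roch for the rank-2 bundle $T_X$, with $c_1=-K$ and $c_2=e$, gives
$$\chi(T_X)=\frac{7K^2-5e}{6}=\frac{7(8-d)-5(4+d)}{6}=\frac{36-12d}{6}=6-2d.$$
With $d=a_0+a_1+a_2$ this is the claimed formula.

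In characteristic $p$, the Euler characteristic count should use $\ell$-adic cohomology. Alternatively, compute $K_X^2=(K_P+X)^2\cdot X$ directly on $P$ using the relation $\xi^3=\deg E^{\vee}$-type and $\xi^2 f=1$, which avoids topology altogether; I would include that intersection computation as the characteristic-free route.
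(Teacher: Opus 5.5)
Your proof is correct, but for the discriminant degree you take a different route from the paper.

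\textbf{The paper's route.} The paper never uses the embedding $X\subset\bP(E)$. It identifies the number $n$ of degenerate fibers with the degree of the (reduced) discriminant and computes the topological Euler characteristic $4+n$. Noether and Gauss--Bonnet then give $K_X^2=8-n$, and Grothendieck--Riemann--Roch applied to $\phi$ gives $\deg(\phi_*\omega_\phi^{-1})=K_X^2-8$. The formula for $\chi(X,T_X)$ is then Riemann--Roch on $X$, exactly as in your last step.

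\textbf{Your route and what it buys.} You instead fix the class $X\in|2\xi+mf|$ by adjunction, with $m=\deg E$, and read off the discriminant as $\det q$, a section of $(\det E^\vee)^{2}\otimes\cO(3m)\simeq\cO(m)$. This gives more than the degree: it exhibits the discriminant as a divisor in $|\det E|$, and it needs no reducedness for the first assertion. Your intersection-theoretic variant is also sound. There $K_X=(-\xi-2f)|_X$ and $\xi^3=-m$, so $K_X^2=2\xi^3+m+8=8-m$. Together with $\chi(\cO_X)=1$ and Noether's formula $c_2(T_X)=12-K_X^2$, this gives $\chi(T_X)=2K_X^2-10=6-2m$ without topology, \'etale cohomology, or reducedness. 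That fits the paper's stated aim of avoiding characteristic-zero techniques. The paper's route is shorter and intrinsic, needing only $\phi_*\omega_\phi^{-1}$ and standard global formulas.

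\textbf{One small fix.} Justify $\chi(\cO_X)=1$ by rationality of $X$ over $\bar k$, or by $\phi_*\cO_X=\cO_{\bP^1}$ and $R^1\phi_*\cO_X=0$. Your alternative ``directly $(K^2+e)/12$'' is circular at that point, since you use $\chi(\cO_X)=1$ to deduce $K_X^2$.
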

\begin{proof}
Recall that the discriminant divisor is reduced; its degree
$n$ is the number of degenerate fibers of $\phi$. 
The topological Euler characteristic $\chi(X)=4+n$.
By the Noether formula and the Gauss-Bonnet theorem, 
$n=8-c_1(\omega_X)^2$. 
The Grothendieck-Riemann-Roch formula 
\cite[Th.~15.2]{FultonIT} implies
$$ \deg(\phi_*\omega_{\phi}^{-1}) = c_1(\omega_X)^2 - 8.$$
Combining these gives the first statement.
Riemann-Roch on $X$ gives the formula for its tangent bundle.
\end{proof}
\begin{coro} \label{coro:balanceforn}
For a balanced good conic bundle $\phi:X \ra \bP^1$
with $n$ degenerate fibers we have
$$(\phi_*\omega_{\phi}^{-1})^{\vee}=
\cO_{\bP^1}(\lfloor n/3 \rfloor) \oplus
\cO_{\bP^1}(\lfloor (n+1)/3 \rfloor) \oplus
\cO_{\bP^1}(\lfloor (n+2)/3 \rfloor).$$
\end{coro}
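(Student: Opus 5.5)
The plan is to combine Proposition~\ref{prop:disc} with the definition of balanced, which leaves only a short integer-arithmetic check. By Proposition~\ref{prop:disc}, the splitting type $(a_0,a_1,a_2)$ of $(\phi_*\omega_{\phi}^{-1})^{\vee}$ satisfies $a_0+a_1+a_2=n$. Here $n$ is the number of degenerate fibers, which equals the degree of the discriminant because the discriminant is reduced. Balance means $a_0\le a_1\le a_2\le a_0+1$. So each $a_i$ lies in $\{a_0,a_0+1\}$, and the triple is determined by $a_0$ together with the number $r\in\{0,1,2\}$ of indices with $a_i=a_0+1$. Since the $a_i$ are ordered, these are the last $r$ entries.

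Summing gives $n=3a_0+r$ with $0\le r\le 2$. This is exactly division with remainder, so $a_0=\lfloor n/3\rfloor$ and $r$ is the residue of $n$ mod $3$. (The case $r=3$ cannot occur, since it would force $a_0=a_2$ and hence $r=0$.)

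It remains to check that the three floors reproduce this triple, by splitting into the three residue classes of $n$ mod $3$. Write $n=3q+r$.
\begin{itemize}
\item If $r=0$, then $\lfloor n/3\rfloor$, $\lfloor (n+1)/3\rfloor$ and $\lfloor (n+2)/3\rfloor$ all equal $q$, matching the triple $(q,q,q)$.
\item If $r=1$, the floors are $(q,q,q+1)$, matching the triple with only $a_2=q+1$.
\item If $r=2$, the floors are $(q,q+1,q+1)$, matching the triple with $a_1=a_2=q+1$.
\end{itemize}
In each case the triple of floors is exactly $(a_0,a_1,a_2)$. Finally, vector bundles on $\bP^1$ split as sums of line bundles (Grothendieck), so the splitting type determines the bundle up to isomorphism, which gives the stated formula.

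No step here is a real obstacle; the substance sits in Proposition~\ref{prop:disc}, which we are allowed to assume. The only care needed is bookkeeping: the degree relation must be applied to the dual bundle $(\phi_*\omega_{\phi}^{-1})^{\vee}$, whose splitting type is defined to be $(a_0,a_1,a_2)$. Proposition~\ref{prop:disc} is already stated in terms of that splitting type, so the sign convention is consistent.
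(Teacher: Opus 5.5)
Your proposal is correct and follows the same route as the paper, which states the corollary without proof as an immediate consequence of Proposition~\ref{prop:disc}: $a_0+a_1+a_2=n$ together with $a_2-a_0\le 1$ forces $(a_0,a_1,a_2)$ to be the division-with-remainder triple you write down. (Your parenthetical about $r=3$ is unnecessary: $a_0$ can never equal $a_0+1$, so $r\le 2$ automatically.)
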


We analyze conic bundles under birational
morphisms:
\begin{prop}\label{prop:birat}
Given a birational morphism of good conic
bundles
$$
\xymatrix{ X \ar[rr]^{\beta} \ar[rd]_{\phi}
& & X' \ar[ld]^{\varphi} \\
& \bP^1 & 
}
$$
we have an inclusion
$$
\phi_*\omega_{\phi}^{-1} 
\hookrightarrow \varphi_*\omega_{\varphi}^{-1}$$
and $-a_2(\phi) \le -a_2(\varphi)$.
If $X$ has a section of self-intersection $-r$
with $r\ge 0$ then $a_2(\phi)\ge r$.  
\end{prop}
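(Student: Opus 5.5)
The plan is to compare the relative anticanonical sheaves directly through the contracted curves, and then to turn every statement about splitting types into a statement about nonzero maps between line bundles on $\bP^1$. Throughout, write $\phi_*\omega_{\phi}^{-1} \simeq \cO(-a_0)\oplus\cO(-a_1)\oplus\cO(-a_2)$, so that $-a_2$ is the smallest degree of a summand.

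\textbf{The inclusion.} Since $\beta$ is a birational morphism of smooth projective surfaces over $\bP^1$, it factors as a sequence of blow-downs of $(-1)$-curves. Each such curve lies in a fiber, and by the description of fibers in Definition~\ref{defi:goodconic} it is a component of a degenerate fiber of $\phi$. Let $E$ be the exceptional divisor, which is effective. Then $\omega_X = \beta^*\omega_{X'}(E)$, and since $\phi = \varphi\circ\beta$ we get
$$\omega_\phi^{-1} = \beta^*\omega_\varphi^{-1}(-E).$$
Pushing forward and using $\beta_*\cO_X=\cO_{X'}$ with the projection formula gives
$$\phi_*\omega_\phi^{-1} = \varphi_*\beta_*\bigl(\beta^*\omega_\varphi^{-1}(-E)\bigr) \hookrightarrow \varphi_*\beta_*\beta^*\omega_\varphi^{-1} = \varphi_*\omega_\varphi^{-1}.$$
The map is injective because $\beta_*$ and $\varphi_*$ are left exact. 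Both sides are vector bundles of rank three, and the map is an isomorphism away from the discriminant.

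\textbf{The bound on $a_2$.} Let $A=\phi_*\omega_\phi^{-1}$ and $B=\varphi_*\omega_\varphi^{-1}$. Compose $A\hookrightarrow B$ with the projection $B\to\cO(-a_2(\varphi))$ onto a summand of minimal degree. This composite is nonzero, because $A\to B$ is generically an isomorphism. Hence some summand $\cO(-a_i(\phi))$ of $A$ maps nonzero to $\cO(-a_2(\varphi))$. A nonzero map of line bundles on $\bP^1$ forces $-a_i(\phi)\le -a_2(\varphi)$. Combining with $-a_2(\phi)\le -a_i(\phi)$ gives $-a_2(\phi)\le -a_2(\varphi)$.

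\textbf{Sections.} Let $s\subset X$ be a section with $s^2=-r$ and $r\ge 0$. I first compute $\deg(\omega_\phi^{-1}|_s)$ by adjunction:
$$K_X\cdot s + s^2 = -2.$$
Using $K_X = K_\phi - 2F$, where $F$ is a fiber, and $s\cdot F=1$, this yields
$$K_\phi\cdot s = -2 + r + 2 = r,$$
so $\omega_\phi^{-1}|_s\simeq\cO_{\bP^1}(-r)$. Next I consider the restriction map
$$\phi_*\omega_\phi^{-1}\to \phi_*\bigl(\omega_\phi^{-1}|_s\bigr)\simeq\cO(-r).$$
This map is surjective. Indeed, by the earlier Proposition, $\omega_\phi^{-1}$ is relatively globally generated and cohomology commutes with base change. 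So on each fiber, sections of the embedding line bundle of the conic surject onto the fiber of the line bundle at the point $s\cap X_p$. The map is therefore nonzero on some summand $\cO(-a_i)$, giving $-a_i\le -r$. Hence $a_2\ge a_i\ge r$.

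\textbf{Main obstacle.} The step needing most care is the first one: justifying that $\beta$ is a composition of contractions of fiber components and writing $\omega_\phi^{-1}=\beta^*\omega_\varphi^{-1}(-E)$ with $E$ effective. This rests on the standard factorization of birational morphisms of smooth surfaces. Everything afterwards is degree bookkeeping on $\bP^1$.
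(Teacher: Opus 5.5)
Your proof is correct. For the inclusion and the inequality $-a_2(\phi)\le -a_2(\varphi)$ you follow the paper's route: the blowup formula for $\omega$, then pushing forward. You also supply the step the paper leaves as ``it follows'': compose with the projection onto a minimal-degree summand of $\varphi_*\omega_{\varphi}^{-1}$, observe the composite is nonzero because the inclusion is generically an isomorphism, and use that a nonzero map $\cO_{\bP^1}(a)\ra\cO_{\bP^1}(b)$ forces $a\le b$.

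For the final assertion you take a genuinely different route. The paper blows down the fibral components disjoint from $\Sigma$ to get $X\ra\bF_r$. It then applies the first part together with $a_2(\bF_r/\bP^1)=r$. That argument reuses the monotonicity just proved, but it needs two further inputs: identifying the blown-down surface as $\bF_r$, and the splitting type $(-r,0,r)$ of $\bF_r$, which the paper states without computation.

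Your argument instead computes $\deg(\omega_{\phi}^{-1}|_s)=-r$ by adjunction. You then use relative global generation from the preceding Proposition to obtain a surjection $\phi_*\omega_{\phi}^{-1}\ra\cO_{\bP^1}(-r)$. This is independent of the first part of the statement and never leaves $X$. It also gives a slightly sharper conclusion: $\phi_*\omega_{\phi}^{-1}$ has a line-bundle quotient of degree $-r$. Applied to the negative section of $\bF_r$, it supplies the inequality $a_2(\bF_r/\bP^1)\ge r$ that the paper's argument takes as given.
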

\begin{proof}
The inclusion 
$\omega_{\phi}^{-1} \hookrightarrow
\beta^*\omega_{\varphi}^{-1}
$
is obtained by iteratively applying the blowup formula for surfaces: If $b:\Bl_s(S) \ra S$ is 
the blowup of a smooth surface with exceptional curve $E$ then
$$\omega_{\Bl_s(S)}=b^*\omega_S(E).$$
Taking direct images gives the inclusion of sheaves
on $\bP^1$. 
It follows that the most negative summand of 
$\phi_*\omega_{\phi}^{-1}$ is no greater than the 
most negative summand of the 
$\varphi_*\omega_{\varphi}^{-1}$; this
gives the first inequality.

Suppose $X$ admits a section $\Sigma$ of self-intersection $-r$. Blowing down fibral components disjoint from $\Sigma$
induces birational morphsm $\beta:X \ra \bF_r$ to a Hirzebruch
surface. Since $a_2(\bF_r/\bP^1)=r$ we get
the final assertion.
\end{proof}

\subsection{Key constructions}
\begin{exam} \label{exam:types}
In each case below $\phi$ is projection onto the first factor.

\begin{enumerate}
\item \cite[\S 3]{Iskpencil}
Let $X=\{G=0\} \subset \bP^1\times \bP^2$ be smooth of
bidegree $(a,2)$. Then the splitting type is $(a,a,a)$.  
\item
Let $X \subset \bP^1\times \bP^3$ be a smooth complete 
intersection of forms $F$ and $G$ of bidegrees $(1,1)$ and $(a,2)$. 
Assume that $Y:=\{F=0\}$ is irreducible, which implies
$$Y\simeq \bP(\cO_{\bP^1}^2 \oplus \cO_{\bP^1}(-1)).$$
The sheaf $(\pi_1)_*(\cO_Y(0,1))$ is presented
$$ 0 \ra \cO_{\bP^1}(-1) \ra \cO_{\bP^1}^4 \ra 
(\pi_1)_*(\cO_Y(0,1)) \ra 0$$
whence 
$$(\pi_1)_*\cO_Y(0,1) \simeq \cO_{\bP^1}^2 \oplus \cO_{\bP^1}(1).$$
It follows that 
$$\phi_* \omega_{\phi}^{-1} = \cO_{\bP^1}(-1-a)^2 \oplus 
\cO_{\bP^1}(-a)$$
and the splitting type is $(a,a+1,a+1)$.  
\item
Let $X \subset \bP^1\times \bP^4$ be a smooth complete 
intersection cut out by two forms 
$$F_1=L_{11}t_1+L_{12}t_2, \quad F_2=L_{21}t_1+L_{22}t_2, \quad
\bP^1=\bP^1_{[t_1,t_2]},$$
of bidegree $(1,1)$ and 
one form $G$ of bidegree $(a-1,2)$. 
Assume that 
$Y:=\{F_1=F_2=0\}$ is generic, i.e.~the $L_{ij}$ are linearly independent. It follows that
$$Y\simeq \bP(\cO_{\bP^1} \oplus \cO_{\bP^1}(-1)^2).$$
We find
\begin{equation} \label{koszul:Y}
0 \ra \cO_{\bP^1 \times \bP^4}(-2.-2) \ra
\cO_{\bP^1 \times \bP^4}(-1,-1)^2 \ra \cO_{\bP^1 \times \bP^4}
\ra \cO_Y \ra 0
\end{equation}
whence
$$0 \ra \cO_{\bP^1}(-1)^2 \ra \cO_{\bP^1}^5 \ra 
(\pi_1)_*(\cO_Y(0,1)) \ra 0$$
and 
$$(\pi_1)_*\cO_Y(0,1) \simeq \cO_{\bP^1} \oplus \cO_{\bP^1}(1)^2.$$
We find that
$$\phi_*\omega_{\phi}^{-1} = \cO_{\bP^1}(-1-a) \oplus \cO_{\bP^1}(-a)^2$$
with splitting type $(a,a,a+1)$.  
\end{enumerate}
\end{exam}
\begin{rema}
On first glance, the splitting type $(0,0,1)$ is missing.  
However for $Y=\{F_1=F_2=0\} \subset \bP^1 \times \bP^4$,
the restriction homomorphism
$$\Gamma(\cO_{\bP^1 \times \bP^4}(-1,2)) \ra 
\Gamma(\cO_Y(-1,2))$$
fails to be surjective; this follows by computing the 
twist of (\ref{koszul:Y}) by $\cO_{\bP^1\times \bP^4}(-1,2)$. 
Indeed
$$\Gamma(\cO_Y(-1,2)) \simeq \Gamma(\cO_{\bP^1}(-1) \oplus
\cO_{\bP^1}^2 + \cO_{\bP^1}(1)^3),$$
which is eight-dimensional.  These divisors all contain
the distinguished section of $Y \ra \bP^1$. 
\end{rema}
\begin{exam} \label{exam:unbalanceP4}
Consider the complete intersection
$$Y_0 = \{L_1t_1 - L_2t_2 = L_2t_1 - L_3t_2 = L_1L_3-L_2^2=0\}
\subset \bP^1 \times \bP^4,$$
with $L_1,L_2,L_3\in \Gamma(\cO_{\bP^4}(1))$ linearly independent. It is also smooth and is the image of 
$$\bP(\cO_{\bP^1}^2 \oplus \cO_{\bP^1}(-2)) 
\hookrightarrow \bP(\cO_{\bP^1}^5)\simeq \bP^1 \times \bP^4$$
induced by global sections.  
Conic bundles obtained from $Y_0$ are not balanced.
\end{exam}

\begin{exam} \label{exam:types2}
Over algebraically closed fields, representative balanced good conic bundles may be obtained via blowing up
$\beta: X \ra \bP^1 \times \bP^1$. Let $f_1$ and $f_2$
denote fibers of projections to $\bP^1$.
We follow the
taxonomy of Example~\ref{exam:types}.
In each case, the center $\{s_1,\ldots,s_n\}$
of $\beta$ consists of points in distinct
fibers of the first projection; 
$\phi:X\ra \bP^1$ is the composed morphism.
\begin{enumerate}
\item{Choose points $s_1,\ldots,s_{3a}$ imposing independent conditions
on the linear series $|2f_2+af_1|$. Then 
$\phi: X=\Bl_{s_1,\ldots,s_{3a}}(\bP^1 \times \bP^1) \ra \bP^1$
yields a good conic fibration of type $(a,a,a)$.}
\item{With points $s_1,\ldots,s_{3a+2}$
imposing independent conditions on $|2f_2+af_1|$,
$X=\Bl_{s_1,\ldots,s_{3a+2}}(\bP^1 \times \bP^1) \ra \bP^1$
is a fibration of type $(a,a+1,a+1)$.}
\item{With points $s_1,\ldots,s_{3a+1}$
imposing independent conditions on $|2f_2+af_1|$ we 
get a fibration 
$X=\Bl_{s_1,\ldots,s_{3a+1}}(\bP^1 \times \bP^1) \ra \bP^1$
of type $(a,a,a+1)$.}
\end{enumerate}
\end{exam}

\begin{exam}[Balance and birationality] \label{exam:warning}
Retain the notation of Proposition~\ref{prop:birat}.
Then $\phi:X \ra \bP^1$ may be balanced even when 
$\varphi:X'\ra \bP^1$ fails to be so.
For example, consider 
$$s_1,s_2,s_3,s_4 \in \bP^1 \times \bP^1, \quad
\pi_2(s_1)=\pi_2(s_2)$$
but otherwise generic. Take 
$$X:=\Bl_{s_1,s_2,s_3,s_4}(\bP^1\times \bP^1)\stackrel{\beta}{\longrightarrow}
\Bl_{s_1,s_2}(\bP^1\times \bP^1)=:X',$$
with structure morphisms $\phi$ and $\varphi$
projecting onto the first factor.
Proposition~\ref{prop:birat} gives
$$\cO_{\bP^1}(-1)^2 \oplus \cO_{\bP^1}(-2) \simeq \phi_*\omega_{\phi}^{-1} 
\hookrightarrow \varphi_*\omega_{\varphi}^{-1}
\simeq \cO_{\bP^1}^2 \oplus \cO_{\bP^1}(-2).$$
In particular, $\phi:X \ra \bP^1$ is balanced
despite having a section with self-intersection $(-2)$ i.e. the horizontal ruling
containing $s_1$ and $s_2$.  
\end{exam}

\begin{exam}[Balance and Singularity]
\label{exam:warning2}
Now consider $X$ obtained by blowing up four
points on the diagonal
$$s_1,s_2,s_3,s_4 \in \Delta_{\bP^1} \subset \bP^1 \times \bP^1 $$
but otherwise generic. Let $D\subset X$ denote the
proper transform of the diagonal, with $D^2=-2$. We obtain an embedding
$$X \hookrightarrow 
\bP( (\phi_*\omega_{\phi}^{-1})^{\vee})\simeq
\bP(\cO_{\bP^1}(1)^2 \oplus \cO_{\bP^1}(2))
\simeq \bP(\cO_{\bP^1}(-1)^2 \oplus \cO_{\bP^1})
\hookrightarrow \bP^1 \times \bP^4.$$
However, on projection to $\bP^4$, $D$ is contracted 
to a point $x_0$;
this morphism comes from the linear series 
of bidegree $(2,2)$ forms on $\bP^1\times \bP^1$
vanishing at $s_1,s_2,s_3,s_4$. 
The image of $X$ in $\bP^4$ has an ordinary double point
at $x_0$.
\end{exam}

\section{Lattices and Weyl groups}
\label{sect:LWG}

Let $\phi: X \ra \bP^1$ be a good conic bundle over an algebraically closed field with $n\ge 1$ degenerate fibers. Write $f$ for the class of a
fiber of $\phi$ and take $\Sigma$ to be a section of $\phi$, which
exists by Tsen's Theorem. Write $E'_1,E''_1,\ldots,E'_n,E''_n$ for the irreducible components of the degenerate fibers so that
$$f \equiv E'_1 + E''_1 \equiv \cdots \equiv E'_n+E''_n;$$
we label so that
$$\Sigma \cdot E'_i=1, \Sigma \cdot E''_i=0, \quad i=1,\ldots,n.$$
We have
$$(E'_i)^2=(E''_i)^2=-1, \quad E'_i\cdot E''_i=1,$$
with the other intersection numbers among $\{E'_1,\ldots, E''_n\}$ equal to zero.  Note that $\{f,\Sigma,E''_1,\ldots,E''_n\}$ freely generates
$\Pic(X)$ and 
$$K_X = -(r+2)f -2\Sigma + E''_1 + \cdots + E''_n,
\quad
\Sigma^2=-r.$$
\begin{rema} \label{rema:goodbasis}
Example~\ref{exam:types2} gives special cases with
$$f=f_1, \Sigma=f_2, E''_i=E_i, E'_i=f_1-E_i$$
so that $r=0$ and
$$K_X = -2 f_1 - 2 f_2 + E_1+\cdots + E_n.$$
The basis $\{f_1,f_2,E_1,\ldots,E_n\}$ may be 
used for all good conic fibrations with a section of even self-intersection, 
not just those arising as blowups of $\bP^1\times \bP^1$. Indeed, we may take
$$f=f_1, \Sigma=f_2-\frac{r}{2}f_1, E''_i=E_i, E'_i=f_1-E_i.$$
\end{rema}

\begin{defi} \label{defi:parity}
Let $\phi:X \ra \bP^1$ be a good conic bundle
with $n\ge 1$ degenerate fibers
$$E_1'\sqcup E_1'',\ldots,E_n' \sqcup E''_n.$$
The collection of disjoint fibral curves
$$\{E_1', \ldots,E'_n\}$$ 
is {\em even} (resp.~{\em odd}) if the 
rational ruled
surface obtained by blowing it down
$$\beta: X \ra \bF_r$$
has $r$ even (resp.~odd). 
\end{defi}
When $r$ is even (resp.~odd) then sections of $\bF_r \ra \bP^1$ all have even (resp.~odd) 
self-intersection. 
Note that if
$\{E_1',E_2'\ldots,E'_n\}$ is even then
$\{E_1'',E_2',\ldots,E'_n\}$ is odd, 
as the blow-down associated with
the second collection is isomorphic to
$\bF_{r-1}$ or $\bF_{r+1}$.

\

We record a classical result (see, for instance, \cite{SkorobogatovBIRS,HassettRSNF}) on intersections:
\begin{prop} \label{prop:getweights}
Consider $\Pic(X)$ as a unimodular lattice under the intersection form.
For $n\ge 2$,
$$
R_n:=\left< K_X,f\right>^{\perp} \subset \Pic(X)
$$
is isomorphic to the root lattice for $D_n$ and its extension
$$W_n:=\Pic(X)/\left<K_X,f\right>$$
is isomorphic to the weight lattice.
The group
$$\{ g\in \Aut(\Pic(X)): g(f)=f, g(K_X)=K_X\}$$
equals the Weyl group $W(D_n)$.
\end{prop}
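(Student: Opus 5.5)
We work in the basis $\{f,\Sigma,E''_1,\ldots,E''_n\}$ of $\Pic(X)$. To keep formulas clean, we first pass to a basis adapted to $\bP^1\times\bP^1$, as in Remark~\ref{rema:goodbasis}. After possibly swapping $E'_1\leftrightarrow E''_1$ we may assume $r$ is even, by the parity discussion after Definition~\ref{defi:parity}. Set $e_i=E_i$ and $f_1=f$. The lattice $\Pic(X)$ is then $U\oplus\langle -1\rangle^n$ when $r$ is even; in the odd case it is $\langle 1\rangle\oplus\langle -1\rangle^{n+1}$. One can avoid choosing and work directly: the class $K_X+2\Sigma+(r+2)f=\sum E''_i$ tells us which classes are orthogonal to $f$ and $K_X$.

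The first step is to compute $R_n$. Write $D=xf+y\Sigma+\sum c_iE''_i$. Then $D\cdot f=y$, so $y=0$. Next, using $\Sigma\cdot E''_i=0$ and $f\cdot E''_i=0$,
\[
D\cdot K_X=x\,f\cdot(-2\Sigma)+\sum c_i\,(E''_i)^2=-2x-\sum c_i .
\]
Hence $R_n=\{xf+\sum c_iE''_i:\ 2x=-\sum c_i\}$. The form restricted to $R_n$ is $-\sum c_i^2$, since $f^2=0$ and $f\perp E''_i$. Projection to $(c_i)\in\bZ^n$ is injective on $R_n$. Its image is exactly $\{c:\sum c_i\equiv 0 \pmod 2\}$ with the negative standard form. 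This is the $D_n$ root lattice, up to sign convention. Explicit roots are $E''_i-E''_j$ and $E''_i+E''_j-f=E''_i-E'_j$.

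The second step is to identify $W_n$. Unimodularity of $\Pic(X)$ gives a surjection $\Pic(X)\to R_n^\vee=\mathrm{Hom}(R_n,\bZ)$ by pairing. Its kernel is $R_n^\perp$, which is the saturation of $\langle K_X,f\rangle$. We check that $\langle K_X,f\rangle$ is saturated: modulo $f$, the class $K_X$ has $\Sigma$-coefficient $-2$ but $E''_i$-coefficients $1$, so it is primitive. This uses $n\ge1$. Therefore $W_n\cong R_n^\vee$, the weight lattice of $D_n$. Concretely, $R_n^\vee$ is generated by $\bZ^n$ and $\frac12(1,\ldots,1)$.

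The third step is the group computation, which is the main point. Any $g$ fixing $f$ and $K_X$ preserves $R_n$, giving a homomorphism to $O(R_n)$. This map is injective. Indeed, $R_n\oplus\langle K_X,f\rangle$ has finite index in $\Pic(X)$, because $\langle K_X,f\rangle$ has Gram matrix with determinant $-4\neq0$ and so is nondegenerate. An isometry is determined on a finite-index sublattice.

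Conversely, each reflection in a root $\alpha\in R_n$ with $\alpha^2=-2$ is an integral isometry of $\Pic(X)$, given by $v\mapsto v+(v\cdot\alpha)\alpha$, and it fixes $\alpha^\perp\ni f,K_X$. So $W(D_n)$ embeds in the stabilizer. It remains to see that the image is not larger. Here $O(D_n)=W(D_n)\rtimes\mathrm{Aut}(\text{diagram})$; for $n\ne4$ the extra symmetry is $c_1\mapsto -c_1$, and for $n=4$ it is the triality group $\fS_3$. An element of the stabilizer acts on $R_n^\vee/R_n$ compatibly with the action on $\Pic(X)/(R_n\oplus\langle K_X,f\rangle)$. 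Since the element fixes $\langle K_X,f\rangle$ pointwise, the gluing between $R_n^\vee/R_n$ and $\langle K_X,f\rangle^\vee/\langle K_X,f\rangle$ forces trivial action on the discriminant group $R_n^\vee/R_n$. This group is $\bZ/4$ or $(\bZ/2)^2$.

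The main obstacle is then the lattice-theoretic fact that the kernel of $O(D_n)\to O(R_n^\vee/R_n)$ is exactly $W(D_n)$. We verify it directly. The sign change $c_1\mapsto-c_1$ sends the weight $\frac12(1,\ldots,1)$ to $\frac12(-1,1,\ldots,1)$, and these differ by a nonzero class modulo $R_n$. The triality elements permute the three nonzero classes of $(\bZ/2)^2$. The case $n=2$, where $D_2=A_1\times A_1$, is handled the same way by explicit check.
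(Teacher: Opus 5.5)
Your proof is correct and follows essentially the same route as the paper. You identify $R_n$ with the $D_n$ root lattice and $W_n$ with $R_n^\vee$. You then use that a stabilizer element fixes $\langle K_X,f\rangle$ pointwise, so it acts trivially on $R_n^\vee/R_n\simeq\langle K_X,f\rangle^\vee/\langle K_X,f\rangle$, and that the kernel of $O(R_n)\to O(R_n^\vee/R_n)$ is $W(D_n)$.

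Beyond that, you supply details the paper leaves implicit or handles by citing Serre:
\begin{itemize}
\item you work in the basis $\{f,\Sigma,E''_i\}$, so no parity choice is needed;
\item you check that $\langle K_X,f\rangle$ is saturated;
\item you show restriction to $R_n$ is injective;
\item you note that root reflections extend to $\Pic(X)$;
\item you verify that diagram automorphisms act faithfully on the discriminant group.
\end{itemize}
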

\begin{proof} The intersection
pairing induces 
$$R_n \subset W_n \subset R_n \otimes \bQ.$$
We use the basis of Remark~\ref{rema:goodbasis}, so
that
$$R_n = \left<f_1-E_1-E_2,E_1-E_2,E_2-E_3,\ldots,
E_{n-1}-E_n \right>,
$$
which has the desired intersections.  
Take the images of
$$ E_1, \ldots, E_n, f_2$$
as generators of $W_n$; setting
$$ L_i:=E_i - \frac{1}{2}f_1 \in R_n \otimes \bQ$$
we have 
$$
\frac{L_1+\cdots + L_n}{2}  \equiv f_2 \pmod{\bQ K_X + \bQ f_1}.
$$
This corresponds to the weights of $D_n$, in the notation 
of \cite[\S 18.1,19.2]{FulHar}; the roots are $\pm L_i \pm L_j,
i\neq j$, i.e. differences of components in distinct singular fibers.
The Weyl group $W(D_n)$ in the basis $\{L_i\}$ consists of signed permutations with an even number of $-1$'s; this coincides with
permutations of irreducible components of degenerate fibers, compatible with 
intersections and preserving parities 
of collections (see Definition~\ref{defi:parity}) .

We turn to the last assertion of the Proposition. Observe that the
discriminant groups
$$R_n^{\vee}/R_n \simeq \left<K_X,f\right>^{\vee} / \left<K_X,f\right>
$$
are isomorphic to $(\bZ/2\bZ)^2$ (for $n$ even) or $\bZ/4\bZ$ (for $n$ odd). Elements of the Weyl group
act trivially on discriminant group. 
Every 
automorphism of $R_n$ acting as the identity on the discriminant
group $R_n^{\vee}/R_n$ is in the Weyl group \cite[V.11]{SerreLie}. 
\end{proof}
\begin{rema} 
For $n\ge 3$, the full group $\Aut(R_n)$ is a semidirect
product of $W(D_n)$ with the automorphisms of the Dynkin diagram 
$D_n$ \cite[V.11]{SerreLie}. The latter factor encodes outer automorphisms 
of the associated complex Lie group \cite[VI.3]{SerreLie}: 
These are the symmetric group $\fS_3$ for $D_4$ and the cyclic group $C_2$ for the other $n$.
\end{rema}

Leaving the conic fibration unspecified yields more
complicated indefinite root systems:

\begin{exam} \label{exam:Mukai}
The blowup $X' \ra \bP^1 \times \bP^1$ at $n$ sufficiently generic
points is a good conic bundle with $n$ degenerate fibers, with respect to 
either of the projections to $\bP^1$.
Now $K_{X'}^{\perp}$ has basis
$$f_2-f_1, f_1-E_1-E_2, E_1-E_2, E_2-E_3,\ldots, E_{n-2}-E_{n-1},
E_{n-1}-E_n,$$
where the $f_i$ are the classes associated with fibrations over $\bP^1$
and $E_1,\ldots,E_n$ are the exceptional classes. 
This lattice is associated with the root system $T_{3,n-2,2}$:

\begin{figure}
$$
\xymatrix{
{\bullet} \ar@{-}[r] & {\bullet} \ar@{-}[r] & {\bullet} \ar@{-}[r] \ar@{-}[d]
& {\bullet} \ar@{-}[r] & {\bullet} \ar@{-}[r] & {\bullet} \ar@{-}[r] & {\bullet} \ar@{-}[r] &{\bullet} \ar@{-}[r] & {\bullet}\\
                      &                     & {\bullet} 
&                       &                       &   & & & 
}
$$
\caption{$T_{3,7,2}$ diagram for $n=9$}
\label{fig:Tpqr}
\end{figure}
The diagram $T_{3,n-2,2}$ is a tree of $(-2)$-curves with chains
of lengths $3,n-2$, and $2$ attached to the unique trivalent 
vertex (Figure~\ref{fig:Tpqr}). See \cite[p.~127]{MukCremona} 
but note the typos in the formulas. The subgroup fixing
$f_1$ is isomorphic to $W(D_n)$.  
\end{exam}

Let $G=\Spin_{2n}$ denote the simply-connected semisimple linear
algebraic group associated with the Dynkin diagram $D_n$. 
Consider the half-spin representations \cite[\S 20.1]{FulHar}
$$\rho_{\pm}:\Spin_{2n} \ra \GL_{2^{n-1}}.$$
They both have weights taken from
\begin{equation}
\left\{ \frac{\pm L_1 \pm L_2 \cdots \pm L_n}{2} \right\}; \label{eqn:signs}
\end{equation}
the weight sets are those consisting of elements
with even and odd numbers of $-1$'s;
these sets are conjugate under an outer automorphism.  
\begin{prop} \label{prop:halfspin}
The half-spin representations have weights corresponding to 
$$\{E \in \Pic(X): E^2=-1, K_XE=-1, f\cdot E =1 \}$$
and
$$\{C \in \Pic(X): C^2=0, K_XC=-2, f\cdot C =1 \},$$
i.e. numerical exceptional curves and conics of degree one with respect to 
$\phi$.  
\end{prop}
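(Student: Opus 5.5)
Working in the basis $\{f_1,f_2,E_1,\ldots,E_n\}$ of Remark~\ref{rema:goodbasis}, I will solve the numerical conditions explicitly. There $f_1^2=f_2^2=0$, $f_1f_2=1$, $E_i^2=-1$, and $K_X=-2f_1-2f_2+\sum E_i$. Write a class as $D=af_1+bf_2-\sum c_iE_i$. Then
$$f\cdot D=b,\quad D^2=2ab-\sum c_i^2,\quad K_X\cdot D=-2a-2b+\sum c_i.$$

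For $E$ in the first set, $b=1$. The two remaining conditions become $2a-\sum c_i^2=-1$ and $-2a-2+\sum c_i=-1$. Adding them gives $\sum (c_i-c_i^2)=0$. Since $c_i-c_i^2\le 0$ for every integer $c_i$, with equality exactly when $c_i\in\{0,1\}$, we get $c_i\in\{0,1\}$ for all $i$, and then $a=(\sum c_i-1)/2$ is determined. For $C$ in the second set, the same computation gives $2a=\sum c_i^2$ and $2a=\sum c_i$, so again $\sum(c_i-c_i^2)=0$ and $c_i\in\{0,1\}$. In both cases the solutions are indexed by subsets $S\subset\{1,\ldots,n\}$, with $\sum c_i=|S|$. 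The integrality of $a$ forces $|S|$ odd for exceptional curves and $|S|$ even for conics. This yields $2^{n-1}$ classes in each set, matching $\dim\rho_\pm$.

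Next I push these classes to the weight lattice $W_n=\Pic(X)/\langle K_X,f\rangle$ and compare with (\ref{eqn:signs}). Modulo $\bQ K_X+\bQ f_1$, the class $af_1$ vanishes. By the proof of Proposition~\ref{prop:getweights}, $f_2\equiv \frac12\sum L_i$ and $E_i\equiv L_i$ (because $E_i=L_i+\frac12 f_1$). Hence
$$D\equiv \tfrac12\sum_i L_i-\sum_{i\in S}L_i=\tfrac12\sum_i \epsilon_iL_i,$$
where $\epsilon_i=-1$ for $i\in S$ and $\epsilon_i=+1$ otherwise. So the number of minus signs equals $|S|$: it is odd for exceptional curves and even for conics.

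The projection is injective on each set, because $S$ determines $D$ once $b$ and $a$ are fixed. The images are therefore exactly the two weight sets of (\ref{eqn:signs}), distinguished by the parity of the number of $-1$'s. These are the weight sets of $\rho_+$ and $\rho_-$; which is which is a convention.

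The main point needing care is the reduction modulo $\langle K_X,f\rangle$. The identification uses the rational projection to $R_n\otimes\bQ$, namely orthogonal projection away from $\bQ K_X+\bQ f$. I will check directly that the classes $L_i$ and $\frac12\sum L_i$ computed in Proposition~\ref{prop:getweights} are the projections of $E_i$ and $f_2$, that is, that $K_X$ and $f_1$ absorb the discrepancies. The basis of Remark~\ref{rema:goodbasis} is available only when $r$ is even. When $r$ is odd, I will either perform a single elementary transformation, swapping $E_1'$ and $E_1''$, to reach an even collection, or simply note that the statement is intrinsic to $\Pic(X)$ with $K_X$ and $f$ and so does not depend on the choice of basis.
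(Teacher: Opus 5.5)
Your proof is correct, and it ends at the same explicit lists as the paper: $f_2+kf_1-E_{i_1}-\cdots-E_{i_{2k+1}}$ and $f_2+kf_1-E_{i_1}-\cdots-E_{i_{2k}}$, matched to the two sign-parity classes in (\ref{eqn:signs}). The difference is in how you know those lists are complete and map onto the weights.

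The paper argues by symmetry. The weights of each half-spin representation form a single $W(D_n)$-orbit, and the numerical conditions are $W(D_n)$-invariant. From this it concludes that checking one class of each type suffices, and then it simply lists the classes without showing the list is exhaustive. You instead solve the conditions outright. Adding the two equations gives $\sum_i c_i(1-c_i)=0$, which forces $c_i\in\{0,1\}$, so your enumeration is provably complete. Your explicit projection $D\mapsto\frac12\sum_i\epsilon_iL_i$, with $\epsilon_i=-1$ exactly for $i\in S$, then gives a bijection onto each weight set with no appeal to transitivity.

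Your route is more elementary and self-contained. It also fills a point the paper leaves implicit. Equivariance alone only shows that each numerical set is a union of $W(D_n)$-orbits. To conclude it is exactly one half-spin weight set, you need either your enumeration or transitivity of $W(D_n)$ on the numerical classes. The paper's framing, in turn, explains conceptually why these sets are Weyl orbits.

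Your check that $E_i-L_i=\tfrac12 f_1$ and $f_2-\tfrac12\sum L_i$ lie in $\bQ K_X+\bQ f$, with $L_i\perp K_X,f$, is exactly what justifies the identification.

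Your treatment of odd $r$ is also sound. Replacing $\Sigma$ by $\Sigma+E'_1$ gives a class with $f$-degree $1$ and square $-r+1$, which is even. This amounts to swapping the roles of $E'_1$ and $E''_1$. So the basis of Remark~\ref{rema:goodbasis} is always available at the numerical level.
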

\begin{proof}
The weights of each half-spin representation form a single orbit
under the $W(D_n)$-action. On the other hand, the geometric 
descriptions of the exceptional curves and conics are also compatible
with the $W(D_n)$-action. Thus it suffices to compute the image
in the weight lattice for a single class of each type. 

This is easy to check using the basis of Remark~\ref{rema:goodbasis}
and the identification of Proposition~\ref{prop:getweights}.
The first collection corresponds to
$$f_2-E_{i_i}, f_2+f_1 - E_{i_1} - E_{i_2} - E_{i_3}, \ldots,
f_2+kf_1 - E_{i_1} - \cdots - E_{i_{2k+1}}, \ldots,$$
where $1\le i_1 < \cdots < i_{2k+1} \le n$ and $2k+1 \le n$. In other words, these are expressions (\ref{eqn:signs}) whose number of positive signs
has parity different from $n$. 
The second collection is
$$f_2, f_2+f_1 - E_{i_1} - E_{i_2}, \ldots,
f_2+kf_1 - E_{i_1} - \cdots - E_{i_{2k}}, \cdots, 2k \le n,$$
which are expressions (\ref{eqn:signs}) whose number of positive signs
has the same parity as $n$.  
\end{proof}
\begin{rema}
The notation of Proposition~\ref{prop:halfspin} suggests
that the sets are represented by $(-1)$-curves and 
conic fibrations. Degenerate cases
of Example~\ref{exam:types2} make clear what can go wrong: The points $\{s_i\}$ might not have distinct
images under the second projection or may fail
to impose independent conditions on the linear series
$f_2+kf_1$. 
\end{rema}

\section{Moduli, Cremona transformations, and monodromy}
\label{sect:MCTM}
We continue to assume that $\phi: X \ra \bP^1$ is a good conic bundle over an algebraically closed field with $n\ge 1$ degenerate fibers.

\subsection{Infinitesimal automorphisms}
\begin{prop} \label{prop:balancenoVF}
If $\phi:X \ra \bP^1$ is a good balanced conic bundle,
with $n\ge 3$ degenerate fibers, then $\Gamma(X,T_X)=0$.
\end{prop}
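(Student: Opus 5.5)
We use the relative tangent sequence of $\phi$. Since $\phi$ is smooth away from the $n$ nodes of the degenerate fibers, we have an exact sequence
$$0 \ra T_{\phi} \ra T_X \ra \phi^*T_{\bP^1}\otimes I_Z \ra 0,$$
where $T_\phi=\mathcal{H}om(\Omega_{\phi},\cO_X)$ is the relative tangent sheaf and $Z$ is the reduced scheme of the $n$ nodes. Indeed, $d\phi$ vanishes exactly at the nodes, and the node being ordinary makes the image ideal reduced. A global vector field $v$ on $X$ therefore induces a section of $\phi^*T_{\bP^1}\otimes I_Z$. Because fibers are connected, $\phi_*\cO_X=\cO_{\bP^1}$, so $\Gamma(\phi^*T_{\bP^1}) = \Gamma(\cO_{\bP^1}(2))$, i.e.\ vector fields on $\bP^1$. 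A section lying in $I_Z$ must vanish at the $n$ points of the discriminant: near a node the local equation is $xy=t$, and $\partial/\partial t$ does not lift to a vector field vanishing at the node. More invariantly, $v$ preserves the discriminant divisor, which is reduced of degree $n$. A vector field on $\bP^1$ vanishing at $n\ge 3$ distinct points is zero. Hence $v$ is vertical, i.e.\ $v\in\Gamma(X,T_\phi)$.

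It remains to show $\Gamma(X,T_\phi)=0$; this is where balance enters. Here we use the embedding $X\subset P:=\bP(V)$ with $V=(\phi_*\omega_\phi^{-1})^\vee$. Vertical vector fields act fiberwise, so they preserve $\omega_\phi^{-1}$, which is canonical. They therefore lift to the linear action on $\phi_*\omega_\phi^{-1}$, i.e.\ to a section of $\mathcal{E}nd(V)$ modulo scalars, preserving the conic equation $q\in\Gamma(\mathrm{Sym}^2V^\vee\otimes L)$ up to scale. Concretely, $\Gamma(T_\phi)$ injects into the space of $A\in\Gamma(\mathcal{E}nd(V))/\Gamma(\cO)$ with $A\cdot q\in\Gamma(\cO)\, q$, the infinitesimal automorphisms of the pair $(P,X)$ over $\bP^1$. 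By Corollary~\ref{coro:balanceforn}, balance gives $a_2-a_0\le1$, so $\mathcal{E}nd(V)\cong\bigoplus\cO(a_i-a_j)$ has global sections consisting of constant matrices plus, when unbalanced by one, entries in $\cO(1)$ mapping the lower summands to the higher.

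The main step is showing that such an $A$ preserving $q$ must be scalar. After normalizing by a scalar, $A$ is a family of infinitesimal orthogonal transformations $A_p\in\mathfrak{so}(q_p)$ (or, more precisely, $\mathfrak{co}$ with the scalar removed) for every $p\in\bP^1$. At a degenerate fiber $q_p$ has rank two, and $A_p$ must preserve its kernel and the pair of lines. We argue that $A$ varies at most linearly in $p$: when $a_0=a_1=a_2$ it is constant, and otherwise only one block has degree one. It must preserve the generic conic and at least three distinct rank-two degenerations, which pins $A$ down. The simplest route is probably this: a nonzero $A$ that is constant on a summand fixes a point or line pencil in every fiber. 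Exponentiating, we get a $\bG_m$- or $\bG_a$-action on $X$ over $\bP^1$, nontrivial on the generic conic, which has fixed points. The fixed locus then gives a multisection of degree at most $2$ stable under the flow. Comparing with Proposition~\ref{prop:disc}, where $\chi(T_X)=6-2n<0$, is not enough by itself, so we instead use the following. A vertical $\bG_a$ or $\bG_m$ action on a nodal fiber must fix the node and both components. On the smooth conic fibers it has one or two fixed points, and the closure of these gives a section or bisection $\Sigma$ through all nodes. For a $\bG_m$-action, $\Sigma$ would give two disjoint sections through every node, which is impossible. For a $\bG_a$-action, the fixed section passing through the nodes forces an unbalanced splitting, since $\Sigma^2$ is very negative, contradicting Proposition~\ref{prop:birat} together with $a_2-a_0\le 1$ for $n\ge3$. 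We expect this last step, ruling out a vertical unipotent action using balance, to be the main obstacle, and we would check it carefully against Example~\ref{exam:unbalanceP4}.
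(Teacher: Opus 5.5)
\textbf{Your first step is correct.} You show that the image of $d\phi$ is $I_Z\otimes\phi^*T_{\bP^1}$. Hence the induced field on $\bP^1$ vanishes at the $n\ge 3$ discriminant points, and $v\in\Gamma(X,T_\phi)$. This is an infinitesimal substitute for the paper's argument that $\Aut^{\circ}(X)$ commutes with $\phi$, and it works in every characteristic.

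\textbf{The gap is in the vertical step.} The $\mathcal{E}nd(V)$ discussion never actually shows that $A$ is scalar; ``pins $A$ down'' is not an argument. The fixed-point argument you substitute rests on a false claim.
\begin{itemize}
\item A section can never pass through a node. Indeed $\Sigma\cdot E'_i+\Sigma\cdot E''_i=1$, while a curve through the node meets both components positively.
\item That the node is fixed does not put it in the closure of the fixed points on nearby smooth fibers; it can be an isolated fixed point. Concretely, blow up $\bP^1\times\bP^1$ at $n$ points in distinct vertical fibers lying on two horizontal rulings. The $\bG_m$ on the second factor fixing those rulings fixes their proper transforms, which are disjoint sections missing every node. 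Each node is an isolated fixed point.
\item Your $\bG_m$ case never uses balance, yet these surfaces carry vertical $\bG_m$-actions with $n\ge 3$. Examples are the $n=4$ surface in the paper's final remark, and $n=6$ with three points on each ruling, of splitting type $(0,3,3)$. So that argument cannot be right.
\item In the $\bG_a$ case, ``$\Sigma^2$ very negative'' is asserted, not derived.
\item Integrating a vector field to a $\bG_a$- or $\bG_m$-action is unavailable in positive characteristic, which the paper explicitly allows.
\end{itemize}

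\textbf{The missing idea is that $T_\phi$ is the line bundle $\omega_\phi^{-1}$.} It is reflexive of rank one on a smooth surface. It agrees with $\omega_\phi^{-1}$ away from the nodes: at a node $xy=t$ it is generated by $x\partial_x-y\partial_y$, which is dual to the generator $dx/x$ of $\omega_\phi$. Hence
$$\Gamma(X,T_\phi)=\Gamma(\bP^1,\phi_*\omega_\phi^{-1})=\bigoplus_{i}\Gamma(\bP^1,\cO_{\bP^1}(-a_i)).$$
This vanishes, because balance and $a_0+a_1+a_2=n\ge 3$ (Proposition~\ref{prop:disc}) force $a_0\ge 1$. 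The same formula explains the examples above, which have $a_0=0$. It also covers $n=4$ uniformly, with no classification needed.

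\textbf{The paper argues differently.} It contracts one component of each degenerate fiber to get $\beta:X\ra\bF_r$. A vertical field must vanish at the centers $s_i$. Nonzero vertical fields on $\bF_r$ vanish only along one or two special sections. So some section of $X$ is more negative than Proposition~\ref{prop:birat} allows for a balanced bundle, with $n=4$ treated by classification.
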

\begin{proof}
Let $\Aut^{\circ}(X)$ denote the identity component of the 
autmorphism group; its tangent space, at the identity, is
$\Gamma(X,T_X)$. Since $\phi$ has at least three degenerate
fibers, automorphisms in $\Aut^{\circ}(X)$ must commute with
$\phi$ i.e. $\Aut^{\circ}(X) = \Aut^{\circ}(X/\bP^1)$. 
Thus vector fields in $\Gamma(X,T_X)$ lie in the 
fibers of $\phi$.

Since $\phi:X\ra \bP^1$ is a good conic bundle, 
it admits a birational morphism
$$\beta:X \ra \bF_r,$$
blowing down $(-1)$-curves in distinct
fibers over $\bP^1$ (see Proposition~\ref{prop:disc}).
Let $s_1,\ldots,s_n\in \bF_r$ denote 
the center of $\beta$. We may assume that none of these
points are contained in a negative section 
of $\bF_r \ra \bP^1$ (if there is such a section).
Indeed, if $\phi$ admits a section of negative
self-intersection, $\beta$ blows down curves disjoint from
that section. 
Finally, Proposition~\ref{prop:birat} and the balanced
condition implies that $r \le \lfloor (n+2)/3\rfloor$.  

For $n=4$, classification shows that the 
following conditions are equivalent:
\begin{itemize}
\item{$X$ is balanced;}
\item{$\phi$ has at most one section of self-intersection
$(-2)$;}
\item{$X\simeq \Bl_{s_1,s_2,s_3,s_4}(\bP^1 \times \bP^1)$
where the $\pi_1(s_i)$ are distinct and 
at most two points have the same image under $\pi_2$;}
\item{$\Gamma(X,T_X)=0$.}
\end{itemize}
Below, we assume $n\neq 4$.

Suppose that $r=0$. Then there are non-zero vector
fields on $\bF_0$ vanishing at $s_1,\ldots,s_n$ only
if these points are contained in two
horizontal rulings. Now at least half the points are
on one of the rulings. The proper transform $\Sigma$
of this ruling in $X$ satisfies $-\Sigma^2 \ge n/2$;
Proposition~\ref{prop:birat} implies 
$-\Sigma^2 \le \lfloor (n+2)/3 \rfloor$.
These are inconsistent unless $n=4$.  

For $r>0$, any vector fields necessarily vanish along the 
negative section, of self-intersection $-r$.  
Suppose $\bF_r$ has non-zero vector fields vanishing
at $s_1,\ldots,s_n$; this can happen only if they are all contained 
in a section with self-intersection $r$. 
Its proper transform $\Sigma \subset X$ would have self
intersection 
$$\Sigma^2 = r-n \le \lfloor (n+2)/3 \rfloor - n,$$
again contradicting Proposition~\ref{prop:birat}.
\end{proof}

\subsection{Parametrizing conic bundles}
Choose an even collection of $n$ disjoint irreducible
components of degenerate fibers, as 
in Definition~\ref{defi:parity}. 
Blowing down yields a birational morphism over $\bP^1$
\begin{equation} \label{eqn:chooseeven}
\beta: X \ra \bF_r, \quad r \text{ even},
\end{equation}
with center $s_1,\ldots,s_n \in \bF_r$, points in distinct
fibers over $\bP^1$.  

\begin{prop} \label{prop:atlas}
Good balanced conic bundles
$\phi: X\ra \bP^1$
with $n$ degenerate fibers may be parametrized
by a smooth irreducible scheme of finite type.
The generic such $X$ is isomorphic to a blowup 
of $\bF_0$.
\end{prop}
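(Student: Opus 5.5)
We construct an explicit atlas from the even blow-down (\ref{eqn:chooseeven}). By Proposition~\ref{prop:birat} and the balanced hypothesis, any good balanced $X$ with $n$ degenerate fibers admits such a $\beta: X\ra \bF_r$ with $r$ even and $0\le r\le \lfloor (n+2)/3\rfloor$. As in the proof of Proposition~\ref{prop:balancenoVF}, we may choose the center to avoid the negative section. Only finitely many even values of $r$ occur.

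For each such $r$, let $U_r \subset (\bF_r)^n$ be the open subset of ordered $n$-tuples $(s_1,\ldots,s_n)$ with the following properties:
\begin{itemize}
\item the points lie in distinct fibers of $\bF_r\ra\bP^1$;
\item no point lies on the negative section when $r>0$;
\item the blowup $X_s=\Bl_{s_1,\ldots,s_n}\bF_r$ is balanced.
\end{itemize}
Over $U_r$ the blowups form a smooth projective family $\mathcal{X}\ra U_r$ of good conic bundles over $\bP^1$. Goodness is automatic, since each singular fiber is the proper transform of a ruling together with an exceptional curve. The balanced locus is open by semicontinuity of the splitting type of $\phi_*\omega_\phi^{-1}$ in families, as remarked after the definition. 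Hence each $U_r$ is a smooth scheme of finite type, being open in $\bF_r^n$, and it is irreducible when nonempty.

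Every good balanced $X$ appears in some $U_r$. The remaining problem is irreducibility of the total parameter space, and this is the step I expect to be hardest. I would handle it by showing that every member of $U_r$ with $r\ge 2$ is a limit of members of $U_0$, so that the disjoint union can be replaced by $U_0$ together with a smooth irreducible enlargement.

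The cleanest version uses the standard degeneration of $\bF_0$ to $\bF_2$. Take a family $\mathcal{F}\ra \mathbb{A}^1$ with fibers $\bF_0$ away from $0$ and $\bF_2$ at $0$, for instance via elementary transformations, or as a family of $\bP^1$-bundles $\bP(V_t)$ with $V_t$ degenerating from $\cO\oplus\cO$ to $\cO(1)\oplus\cO(-1)$. More generally, use a versal family of rank-two bundles of even degree on $\bP^1$ whose parameter space is smooth and irreducible; the generic member is balanced, i.e.\ $\bF_0$.

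I would then take the relative configuration space of $n$ points in distinct fibers on this versal family, restricted to the open balanced locus. This gives a single smooth irreducible scheme of finite type: it is smooth over a smooth irreducible base with irreducible open fibers, and it dominates every $U_r$. The versal family of bundles is the space of extensions
\[ 0\ra \cO(-k)\ra V\ra \cO(k)\ra 0, \]
which is an affine space, for $k$ large enough that all $r\le \lfloor (n+2)/3\rfloor$ occur.

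The generic point of this space has $V\simeq\cO\oplus\cO$. Hence the generic $X$ is a blowup of $\bF_0$, which gives the second assertion. It remains to check that the configuration condition stays nonempty after restricting to balanced $X$. Example~\ref{exam:types2} exhibits balanced blowups of $\bF_0$ for every $n$, so the balanced locus is a nonempty open subset.
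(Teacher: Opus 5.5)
Your proposal is correct and follows the same route as the paper. Both arguments bound $r$ via Proposition~\ref{prop:birat} and the balance condition, deform the even Hirzebruch surfaces $\bF_r$ to $\bF_0$, and take configurations of $n$ points in distinct fibers. Your universal extension over $\mathrm{Ext}^1(\cO(k),\cO(-k))$ is a global, explicit replacement for the paper's appeal to local deformation spaces of $\bF_r$, and it makes irreducibility and the genericity of $\bF_0$ transparent.

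One small caveat concerns your appeal to Proposition~\ref{prop:balancenoVF} to keep the center off the negative section: it ignores parity, since that blowdown may land on an odd $\bF_m$. This costs nothing, because your final family never imposes that condition and Proposition~\ref{prop:birat} bounds $r$ for every even blowdown.
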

The balance assumption is necessary for finite type:
Otherwise, we would have blow-ups of $\bF_r$ for any even $r$!
\begin{proof}
The splitting type of $\phi_*\omega^{-1}_{\phi}$
is determined by $n$, the degree of the discriminant
(see Proposition~\ref{prop:disc}). By Proposition~\ref{prop:birat}, sections $\Sigma$ of $\phi$ have
self-intersections bounded from below. Specifically,
$\beta:X \ra \bF_r$
for $r \le \lceil n/3 \rceil + 1$. Thus our conic
bundles are parametrized by a scheme of finite type.
The deformation space of $\bF_r$ is smooth
of dimension $r/2$; the space of $n$-tuples of distinct points on these surfaces in also smooth.  
Thus the parameter space is smooth and irreducible. The 
description of the generic member follows because a 
generic deformation of $\bF_r$ (for even $r$) is isomorphic to $\bF_0$.
\end{proof}

\begin{defi}
A good conic bundle $\phi:X \ra \bP^1$
with $n\ge 1$ degenerate fibers
is {\em typical} if every even collection 
of curves blows down to $\bF_0$.

A typical conic bundle $\phi:X \ra \bP^1$ with
$n\ge 3$ degenerate fibers
is {\em uniform} if, for each $\beta:X \ra \bF_0$
arising from an even collection
and indices $1\le i_1<i_2<i_3 \le n$, the
points $\beta(E'_{i_1}), \beta(E'_{i_2}), \beta(E'_{i_3})\in \bF_0$
are in general position.

A uniform conic bundle is {\em uniformly balanced} 
if each blow down $\beta:X \ra X'$ along a collection
of $<n$ fibral curves is balanced.  
\end{defi}
These conditions are all stable under blow downs
of fibral curves
$$
\xymatrix{ X \ar[rr]^{\beta} \ar[rd]
& & X' \ar[ld] \\
& \bP^1 & 
}
$$
even though being balanced is {\em not} respected
under blow down (Example~\ref{exam:warning}).

\begin{prop}
Consider good conic bundles with $n\ge 1$ degenerate
fibers.  
\begin{itemize}
\item{Balanced, typical, uniform, and uniformly balanced are dense Zariski-open conditions on good conic bundles
with the requisite number of degenerate fibers.}
\item{A uniform conic bundle $X\ra \bP^1$ with $n$
degenerate fibers, equipped with
an ordered collection $\{E'_1,\ldots,E'_n\}$,
has no automorphisms. }
\end{itemize}
\end{prop}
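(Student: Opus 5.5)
For the first bullet, I will work on the smooth irreducible parameter space of Proposition~\ref{prop:atlas}, or on an étale-local atlas of it obtained from $n$-tuples of points $(s_1,\ldots,s_n)$ in distinct fibers of $\bF_r$ together with deformations of $\bF_r$. Since this space is irreducible, density of each condition follows once it is open and nonempty. Nonemptiness comes from blowing up $\bF_0=\bP^1\times\bP^1$ at $n$ sufficiently general points, as in Example~\ref{exam:types2}. For openness, I will record each condition as a finite union of closed conditions on the atlas.

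There are finitely many even collections: choose one component from each degenerate fiber, subject to a parity constraint. Changing the collection amounts to an elementary transformation at some of the points $s_i$. Such a change is a finite étale-local operation on the family, because the components of the degenerate fibers become distinguishable after a finite étale base change. Under this operation the following are all closed conditions in families:
\begin{itemize}
\item the condition that a blow-down lands in $\bF_r$ with $r\ge 2$, which is upper semicontinuity of $h^0$ of the normal bundle of a section, or equivalently of the splitting type of a rank-two bundle;
\item the condition that three points lie on a common ruling or on a common $(1,1)$-curve, i.e.\ fail to be in general position;
\item unbalance, which is upper semicontinuity of the splitting type of $\phi_*\omega_\phi^{-1}$.
\end{itemize}
Hence typical and uniform are open. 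Uniformly balanced is open because it is a finite intersection of balance conditions applied to the finitely many partial blow-downs, each of which varies in a family over the same finite étale cover. Nonemptiness of all four conditions holds for generic points on $\bF_0$. In that case every even collection corresponds to a generic configuration on $\bF_0$, since elementary transformations at generic points of $\bF_0$ send generic configurations to generic configurations.

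For the second bullet, let $g$ be an automorphism of $X$. Since $g$ fixes the ordered collection $\{E'_i\}$, it fixes each $E'_i$, hence each degenerate fiber and each $E''_i$. Because $n\ge 3$, $g$ also preserves the fibration, acting on $\bP^1$ and fixing the $n\ge 3$ points of the discriminant, so $g$ covers the identity on $\bP^1$. (I read ``uniform'' as forcing $n\ge3$.) Now $g$ descends along $\beta:X\ra\bF_0$ to an automorphism $\bar g$ of $\bP^1\times\bP^1$ that commutes with the first projection and fixes $s_1,\ldots,s_n$. Such a $\bar g$ has the form $(t,y)\mapsto(t,A(t)y)$, and preserving the second ruling forces $A$ to be constant, so $\bar g=\mathrm{id}\times h$ with $h\in\PGL_2$. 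By uniformity, $s_1,s_2,s_3$ are in general position, so in particular their second coordinates are distinct. Then $h$ fixes three distinct points of $\bP^1$, so $h=\mathrm{id}$. Therefore $\bar g$ is the identity, and hence so is $g$.

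The main obstacle I expect is making the openness argument for ``typical'' and ``uniformly balanced'' rigorous across all even collections at once. The issue is that the labelling of fiber components is only defined after a finite cover, and elementary transformations must be performed in families. I would handle this by passing to the finite étale cover that trivializes the monodromy on components; the conditions are then closed conditions there, and their images under the finite map remain closed.
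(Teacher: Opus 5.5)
Your proposal is essentially correct and follows the paper's route. The paper proves the first bullet by citing the smooth irreducible parameter space of Proposition~\ref{prop:atlas}, whose generic member is a blowup of $\bF_0$. It proves the second by noting that an element of $\PGL_2\times\PGL_2$ fixing three points in general position is trivial. You supply the semicontinuity and finite \'etale labelling details that the paper leaves implicit. Two statements need repair, though.

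First, ``lie on a common $(1,1)$-curve'' cannot be part of the failure of general position. Since $|\cO_{\bP^1\times\bP^1}(1,1)|\simeq\bP^3$, any three points lie on such a curve; three points with pairwise distinct coordinates lie on exactly one, the graph of a M\"obius transformation. With your wording the uniform locus would therefore be empty, contradicting your nonemptiness claim. Conversely, ``all three on a common ruling'' is too weak for what you actually use in the second bullet, namely pairwise distinct second coordinates. The correct closed condition is that two of the three points share a horizontal ruling, equivalently that they lie on a \emph{reducible} $(1,1)$-curve. Vertical rulings need no condition, since the points already lie in distinct fibers of $\phi$.

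Second, that $g$ commutes with $\phi$ is part of the definition of isomorphisms in $\cC^u_n$, not a consequence of $n\ge 3$. You need it before you can conclude that $g$ fixes the degenerate fibers and the $E''_i$. For bare surface automorphisms the claim fails. Take $n=3$ with $s_1,s_2,s_3$ on the diagonal; up to coordinates this is the uniform configuration. The swap of factors of $\bP^1\times\bP^1$ lifts to $X$ and fixes every $E'_i$, but it exchanges the two conic bundle structures and moves each $E''_i$.

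With these corrections your argument goes through.
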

\begin{proof}
The first assertion follows from Proposition~\ref{prop:atlas}.
Any automorphism of $\bP^1\times \bP^1$ fixing 
three points in general position is the identity, which
gives the second assertion. 
\end{proof}
There may be automorphisms acting non-trivially
on the fibral curves (see Proposition~\ref{prop:n4aut}).

\subsection{Introduction of moduli stacks}
\begin{defi}
The moduli stack $\cC_n$ (resp.~$\cC_n^b$, $\cC_n^t$, $\cC_n^u$ or $\cC_n^{ub}$) parametrizes
$$(\phi:X \ra \bP^1; E'_1,\ldots,E'_n)$$
where $\phi$ is a good (resp.~balanced, typical, uniform, or uniformly balanced) conic bundle
with $n$ degenerate fibers and
$E'_1,\ldots,E'_n$ is an even tuple of 
components of degenerate fibers. 
Isomorphisms are commutative diagrams
$$
\xymatrix{ X \ar^{\sim}[r] \ar_{\phi}[d] & Y \ar^{\varphi}[d] \\
\bP^1  \ar^{\sim}[r] & \bP^1
}
$$
where the horizontal arrows are isomorphisms
respecting the fibral curves and their order.
\end{defi}
Except for $\cC_n$, these stacks are of finite type.
The group $C_2^{n-1}$ acts on all these spaces via relabeling
the components of the degenerate fibers i.e.~for an even
number of indices we replace $E'_i$ with $E''_i$. 
The Weyl group $W(D_n)$ acts transitively on even collections
of fibral curves; see the proof of Proposition~\ref{prop:getweights} for an explanation why.
Being good, balanced, typical, uniform, and uniformly balanced are compatible with 
this action, thus $W(D_n)$ acts naturally on $\cC_n, \cC^b_n$, $\cC^t_n$,
$\cC^u_n$, and $\cC^{ub}_n$.

\begin{prop}
Consider the diagonal action of $\PGL_2\times \PGL_2$
on $n\ge 3$ copies of $\bF_0\simeq \bP^1 \times \bP^1$.  
The mapping
\begin{align*}
\iota_n: \cC^t_n &\hookrightarrow [\PGL_2\times \PGL_2 \backslash \bF_0^n] \\
(\phi:X \ra \bP^1; E'_1,\ldots,E'_n) & \mapsto
(\beta(E'_1),\ldots,\beta(E'_n))
\end{align*}
is an open embedding. 
\end{prop}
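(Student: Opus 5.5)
We will show that $\iota_n$ is a well-defined morphism of stacks, that it is fully faithful, and that its essential image is an open substack. First, the map is well defined. Given a family $(\phi:X\ra \bP^1\times T \ra T; E'_1,\ldots,E'_n)$ in $\cC^t_n(T)$, the ordered even collection of relative $(-1)$-curves $E'_i$ can be contracted in families; this relies on cohomology vanishing for $\cO(E'_i)$ restricted to the fibers. The result is a $\bP^1$-bundle $\beta:X\ra X'$ over $\bP^1_T$ together with $n$ sections $s_i=\beta(E'_i)$. Typicality says that every geometric fiber of $X'$ is $\bF_0$. Since $\bF_0$ is rigid, $X'$ is étale locally on $T$ isomorphic to $\bP^1\times\bP^1\times T$, and the first projection is identified with $\phi$. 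Changing the trivialization amounts to acting by a $\PGL_2\times\PGL_2$-torsor. Note that automorphisms swapping the two factors are excluded, because the first factor is the base of $\phi$. Thus we obtain a $T$-point of $[\PGL_2\times\PGL_2\backslash \bF_0^n]$, compatibly with pullback.

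Second, full faithfulness. An isomorphism in $\cC^t_n$ respecting the ordered $E'_i$ descends through the contractions to an isomorphism of $\bF_0$'s. This isomorphism commutes with the first projection up to an automorphism of the base, so it lies in $\PGL_2\times\PGL_2$, and it carries $s_i$ to the corresponding point. Conversely, an element of $\PGL_2\times\PGL_2$ carrying one tuple to the other lifts uniquely to the blowups by the universal property of blowing up, preserving the exceptional curves and their order. These two constructions are mutually inverse. This is what makes $\iota_n$ a monomorphism, indeed an equivalence onto its image on automorphism groups as well.

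Third, the image is open and $\iota_n$ is an equivalence onto it. Given a family of $n$ points of $\bP^1\times\bP^1$ over $T$, the condition that the $\pi_1(s_i)$ be pairwise distinct is open. Under that condition, blowing up the sections gives a smooth family of good conic bundles with $n$ degenerate fibers, and $E'_i$ are the exceptional curves. The collection $\{E'_i\}$ is even because it blows down to $\bF_0$. Typicality is an open condition by the Proposition preceding this one; it requires every even collection to blow down to $\bF_0$, not just the chosen one. Hence the image is the open substack cut out by distinctness of first projections intersected with the typical locus. The blowup construction gives a quasi-inverse over this open substack, so $\iota_n$ is an open embedding.

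The main obstacle is openness of typicality in families. Other even collections are related to the chosen one by the $W(D_n)$, respectively $C_2^{n-1}$, relabelings, and the question is whether their contractions jump to $\bF_r$ with $r\geq 2$. I would handle this via upper semicontinuity. Contracting a different even collection yields $\bF_0$ exactly when $h^0$ of the class $f_2 - \frac{r}{2}f_1$ (the potential negative section) vanishes appropriately. Equivalently, there should be no section of self-intersection $\le -2$ in the blown-down surface. By semicontinuity of $h^0$ of the relevant line bundles on $X$, this is an open condition, and only finitely many collections need to be checked. If this proves delicate, I would simply invoke the earlier Proposition asserting that typical is a Zariski-open condition.
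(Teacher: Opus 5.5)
Your proposal is correct and matches the reasoning the paper relies on. The paper gives no written proof of this proposition beyond the observation that, for typical bundles, an ordered even collection canonically determines the blow-down $\beta\colon X \ra \bF_0$. That map is inverted by blowing up $n$ points with distinct first projections, and openness comes from the earlier proposition that typicality is a Zariski-open condition; your argument makes the same steps explicit, including contraction in families, the $\PGL_2\times\PGL_2$-torsor of trivializations, and lifting isomorphisms through the universal property of blowing up.
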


For generic $s_1,s_2,s_3 \in \bF_0^3$ 
there is a unique 
$\alpha \in \PGL_2\times \PGL_2$ with
$$s_1 \stackrel{\alpha}{\mapsto} (0,0), \
s_2 \stackrel{\alpha}{\mapsto} (1,1), \
s_3 \stackrel{\alpha}{\mapsto} (\infty,\infty).$$
Thus we have another open embedding
\begin{align*}
\bF_0^{n-3} & \hookrightarrow [\PGL_2\times \PGL_2 \backslash \bF_0^n]\\
(s_4,\ldots,s_n) & \mapsto [(0,0),(1,1),(\infty,\infty),
s_4,\ldots,s_n].
\end{align*}

\begin{prop}
The embedding $\iota_n$ restricted to the uniform locus
is also an open embedding
\begin{align*}
j_n: \cC^u_n &\hookrightarrow \bF_0^{n-3} \\
(\phi:X \ra \bP^1; E'_1,\ldots,E'_n) & \mapsto
(\alpha(\beta(E'_4)),\ldots,\alpha(\beta(E'_n))).
\end{align*}
\end{prop}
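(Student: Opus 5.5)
The plan is to factor $j_n$ as the composite of $\iota_n$, restricted to $\cC^u_n$, with the inverse of the open embedding $\bF_0^{n-3}\hookrightarrow[\PGL_2\times\PGL_2\backslash\bF_0^n]$ constructed just above. Two things then need checking. First, the image of $\iota_n|_{\cC^u_n}$ lands inside the open substack $U$ of configurations whose first three points are in general position. Second, $\cC^u_n$ is an open substack of $\cC^t_n$. Granting these, $j_n$ is the composite of open embeddings $\cC^u_n\hookrightarrow U\simeq \bF_0^{n-3}$, hence an open embedding. Note that $U$ is a scheme because the normalization $\alpha$ is unique.

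Here ``general position'' means that $s_1,s_2,s_3\in\bF_0$ have pairwise distinct images under both projections $\pi_1,\pi_2$. That is exactly the condition for a unique $\alpha=(\alpha_1,\alpha_2)\in\PGL_2\times\PGL_2$ to send them to $(0,0),(1,1),(\infty,\infty)$, since each $\alpha_i$ is the unique Möbius map carrying three distinct points of $\bP^1$ to $0,1,\infty$. The first projections are automatically distinct, because the $s_i$ lie in distinct fibers. The distinctness of second projections is part of the uniformity hypothesis applied to the even collection $\{E'_1,\dots,E'_n\}$ and the indices $(1,2,3)$. So $\iota_n(\cC^u_n)\subset U$, and on $U$ the inverse map is $[s_1,\dots,s_n]\mapsto(\alpha(s_4),\dots,\alpha(s_n))$. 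This composite is the formula in the statement.

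For openness, uniformity adds finitely many conditions to typicality. For each even collection obtained from $\{E'_i\}$ by the $C_2^{n-1}$ relabelings, and each triple of indices, three points in $\bF_0$ must be in general position. Each condition is open in families: the blow-down $\beta$ varies algebraically over a family of typical bundles, and coincidence of projections is closed. This is also the content of the earlier proposition that uniform is Zariski open, which I would simply invoke. An open substack of $\cC^t_n$ maps by the open embedding $\iota_n$ onto an open substack, and since that substack lies in $U$ the restriction stays an open embedding.

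The main obstacle is making sure that the ``general position'' in the definition of uniform is exactly the condition defining $U$, i.e.\ that it forces $\pi_2(s_1),\pi_2(s_2),\pi_2(s_3)$ to be distinct, rather than some weaker notion. If the definition is read more weakly, I would argue directly: if two of the $s_i$ shared a horizontal ruling, that ruling would pass through a point whose blow-down produces a non-typical even collection, contradicting typicality via the parity-switch remark after Definition~\ref{defi:parity}. I would try that argument first if needed. As a consistency check, uniform objects have no automorphisms, by the earlier proposition, which is consistent with $\cC^u_n$ being a scheme.
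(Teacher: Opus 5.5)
Your proposal is correct and matches the argument the paper intends: the paper gives no separate proof, only the factorization of $\iota_n|_{\cC^u_n}$ through the normalization embedding $\bF_0^{n-3}\hookrightarrow[\PGL_2\times\PGL_2\backslash\bF_0^n]$, with uniformity for the indices $1,2,3$ guaranteeing that the unique $\alpha$ exists. Your fallback is also sound: if $s_i,s_j$ shared a horizontal ruling, the even collection obtained by replacing $E'_i,E'_j$ with $E''_i,E''_j$ would blow down to a surface containing a $(-2)$-section, i.e.\ to $\bF_2$, so typicality alone already forces distinct second projections.
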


The spaces $\cC^u_n$, and $\cC^{ub}_n$ are open subsets of
$\bF_0^{n-3}$, which inherits a birational
action of the Weyl group. 

\begin{rema}
Constructions of birational actions of 
generalized Weyl groups on products of projective spaces are discussed in \cite[ch.~VI]{DolgOrt};
the formalism there does not directly
address our case. Mukai \cite{MukCremona}
extended this framework; the case relevant
for us is Example~\ref{exam:Mukai}.
Section 2 of \cite{CTHilb14} (see Lemmas 2.2 and 2.3) explains how $W(D_n)$
arises via small birational modifications
$$\Bl_{q_1,\ldots,q_n}(\bP^{n-3}) \stackrel{\sim}{\dashrightarrow} \Bl_{q_1,\ldots,q_n}(\bP^{n-3}),$$
where $q_1,\ldots,q_n \in \bP^{n-3}$ are in general
position. 
\end{rema}

Quotients of 
quasi-projective varieties under
finite groups (like $W(D_n)$) are separated stacks
with quasi-projective coarse moduli spaces. 
We refer the reader to \cite[App.~1]{MustataZeta} for discussion, including
behavior of quotients under field extensions.

We summarize our results:
\begin{theo} \label{theo:modulistack}
The moduli stack of uniform (resp.~uniformly balanced)
conic fibrations
$\phi:X \ra \bP^1$ with $n\ge 3$ degenerate fibers
is isomorphic to $[W(D_n) \backslash \cC^u_n]$ 
(resp.~$[W(D_n)\backslash \cC^{ub}_n]$). These are separated
with quasi-projective coarse moduli spaces.
\end{theo}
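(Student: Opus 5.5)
The plan is to compare the stack $\cM^u_n$ of uniform conic fibrations (no marking) with the stack $\cC^u_n$ of uniform conic fibrations equipped with an even ordered tuple $(E'_1,\ldots,E'_n)$. The forgetful morphism $\cC^u_n \ra \cM^u_n$ is the object to study. The main step is to show it is a $W(D_n)$-torsor, so that $\cM^u_n \simeq [W(D_n)\backslash \cC^u_n]$.

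For a fixed uniform $\phi: X\ra \bP^1$ over an algebraically closed field, I would first identify the set of markings. A marking is an ordered even collection $(E'_1,\ldots,E'_n)$, one component from each degenerate fiber, with the fibers themselves ordered. The set of such data is a torsor under the group of signed permutations with an even number of sign changes, namely $C_2^{n-1}\rtimes \fS_n$. By the proof of Proposition~\ref{prop:getweights}, this group is exactly $W(D_n)$: it acts on the $L_i$, preserves parity of collections, and acts transitively on even collections. Hence the fiber of the forgetful map over $X$ is a $W(D_n)$-torsor.

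The second step is to spread this out over a base $T$. For a family $X_T\ra \bP^1_T$ in $\cM^u_n$, the degenerate fibers form a finite étale cover of $T$ of degree $n$, because the discriminant is reduced. The components of the degenerate fibers form a further étale double cover. Even orderings therefore form a finite étale $W(D_n)$-torsor over $T$. Its $T$-points are exactly the lifts to $\cC^u_n$. This shows the forgetful map is representable, finite étale, and a $W(D_n)$-torsor, which gives the isomorphism with the quotient stack.

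It remains to check that the $W(D_n)$-action on $\cC^u_n$ defined before the theorem is compatible with relabeling. It is, since both are induced by relabeling fibral curves. The action is free on objects by the Proposition saying that uniform marked bundles have no automorphisms. Consequently, automorphisms of an unmarked $X$ correspond exactly to stabilizers in $W(D_n)$; for example, Proposition~\ref{prop:n4aut} accounts for automorphisms that permute the fibral curves.

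For separatedness and the coarse space, I would use the open embedding $j_n:\cC^u_n\hookrightarrow \bF_0^{n-3}$. This shows $\cC^u_n$ is a quasi-projective scheme, and $W(D_n)$ acts on it by regular automorphisms. The quotient of a quasi-projective variety by a finite group is a separated Deligne--Mumford stack with quasi-projective coarse space, by \cite[App.~1]{MustataZeta}.

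The uniformly balanced case is identical once we know $\cC^{ub}_n\subset \cC^u_n$ is an open $W(D_n)$-stable subset. This holds because uniform balance is a condition on the unmarked bundle.

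The main obstacle is showing that the $W(D_n)$-action on $\cC^u_n$ is by regular automorphisms of the scheme $\cC^u_n$, not merely birational maps of $\bF_0^{n-3}$. I would handle this through the moduli interpretation: an element of $W(D_n)$ acts on families by relabeling, and uniformity guarantees that the new even collection again blows down to $\bF_0$ with three points in general position. Therefore the relabeled family again lies in $\cC^u_n$, and the action is an automorphism of the functor, hence of the representing scheme.
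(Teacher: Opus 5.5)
Your proposal is correct and takes essentially the same route as the paper. The paper gives no separate proof: it assembles the open embedding $j_n:\cC^u_n\hookrightarrow\bF_0^{n-3}$, the $W(D_n)$-action by relabeling even collections, the absence of automorphisms of marked uniform bundles, and the fact that finite quotients of quasi-projective schemes are separated with quasi-projective coarse spaces. Your explicit torsor argument over a base $T$ spells out what the paper leaves implicit. One wording slip: the $W(D_n)$-action on $\cC^u_n$ is not free (for $n=3$ all of $W(D_3)\simeq\fS_4$ fixes the single point, and for $n=4$ the generic stabilizer is $C_2^3$ by Proposition~\ref{prop:n4aut}). What the no-automorphism proposition actually gives is that $\Aut(X)$ acts freely on the markings of $X$, and that is exactly the fact your next sentence correctly uses.
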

\begin{coro} \label{coro:monodromy}
The monodromy representation on Picard
groups of good conic bundles
with $n\ge 3$ degenerate fibers
is the Weyl group $W(D_n)$.  
\end{coro}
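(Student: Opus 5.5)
The monodromy group is squeezed between two bounds. The upper bound: monodromy lies in $W(D_n)$. The lower bound: monodromy is all of $W(D_n)$. I would first make precise what monodromy means here. Over the parameter space of Proposition~\ref{prop:atlas}, or any family of good conic bundles with $n$ degenerate fibers over a connected base, the geometric Picard groups form a locally constant system of lattices (étale-locally in the fiber and the fibral components). Monodromy acts on this system through $\Aut(\Pic(X))$. It preserves the intersection form, the fiber class $f$ (the fibration is part of the family) and the canonical class $K_X$. By Proposition~\ref{prop:getweights}, the stabilizer of $f$ and $K_X$ in $\Aut(\Pic(X))$ is exactly $W(D_n)$. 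Hence the monodromy group is contained in $W(D_n)$.

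For the reverse inclusion I would work on the dense open uniform locus. By the Proposition above, being uniform is a dense Zariski-open condition on good conic bundles, so the monodromy of the uniform locus injects into that of the whole family. It therefore suffices to treat the uniform locus. By Theorem~\ref{theo:modulistack}, its moduli stack is $[W(D_n)\backslash \cC^u_n]$, and $\cC^u_n$ is an open subset of $\bF_0^{n-3}$ via $j_n$, hence irreducible.

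The map $\cC^u_n \ra [W(D_n)\backslash\cC^u_n]$ is a $W(D_n)$-torsor. Its fiber over a conic bundle $X$ is the set of markings of $X$: an ordered even tuple $(E'_1,\ldots,E'_n)$ of fibral components. Equivalently, by Proposition~\ref{prop:getweights}, a marking is an identification of $\Pic(X)$ with the standard lattice, compatible with $f$ and $K_X$. Because the total space $\cC^u_n$ is connected, this torsor is connected. The monodromy of the universal family on Picard groups is precisely the image of the fundamental group of the base in $W(D_n)$ classifying this torsor. A connected $W(D_n)$-torsor forces that image to be all of $W(D_n)$, giving the lower bound.

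I expect the main obstacle to be the identification in the last step. One has to check that the $W(D_n)$-action on $\cC^u_n$ is free and is genuinely realized by relabeling the even collections. Freeness comes from uniform conic bundles with an ordered even collection having no automorphisms (the Proposition above). Realization by relabeling means the action is the relabeling action composed with the birational Cremona-type action on $\bF_0^{n-3}$. In particular, I must ensure that a point of $\cC^u_n$ and its translate by $w\in W(D_n)$ lie over the same underlying conic bundle, with markings differing by $w$ acting on $\Pic(X)$. Then the torsor of markings coincides with the torsor $\cC^u_n\ra[W(D_n)\backslash\cC^u_n]$. Over a non-closed or finite base field I would pass to the algebraic closure and use the étale fundamental group in place of the topological one, which the finite-group quotient formalism cited from \cite{MustataZeta} accommodates.
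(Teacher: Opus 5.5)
Your proposal is correct and is essentially the paper's proof: the upper bound comes from Proposition~\ref{prop:getweights}, since monodromy fixes $f$ and $K_X$. The lower bound comes from the fact that $\cC^u_n$ (open in $\bF_0^{n-3}$, hence connected) is a $W(D_n)$-cover of a dense open part of the moduli stack, and the markings trivialize the Picard local system on $\cC^u_n$.

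One correction to your last paragraph: the $W(D_n)$-action on $\cC^u_n$ need not be free. It is trivial on the point $\cC^u_3$, where $\Aut(X)\simeq W(D_3)$, and it has nontrivial generic stabilizer for $n=4$ (Proposition~\ref{prop:n4aut}). The absence of automorphisms of marked objects only makes $\cC^u_n$ a space rather than a stack. The map $\cC^u_n \ra [W(D_n)\backslash \cC^u_n]$ is a $W(D_n)$-torsor for the stack quotient regardless, so freeness is neither needed nor true.
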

\begin{proof}
Theorem~\ref{theo:modulistack} reflects that
each even $n$-tuple $\{E'_1,\ldots,E'_n\}$
canonically determines a blowup representation
$\beta:X \ra \bF_0$.  
Proposition~\ref{prop:getweights} guarantees
the monodromy is at most $W(D_n)$. A dense Zariski-open subset of moduli has a geometrically-connected $W(D_n)$ cover, i.e. $\cC^u_n$; since the monodromy over $\cC^u_n$
is trivial by construction, so the corollary follows.
\end{proof}
\begin{rema}
Over infinite fields, unirational varieties
have Zariski-dense rational points.
Thus we can
always find points $s_1,\ldots,s_n \in \bF_0$ such
that the resulting conic bundle is uniformly balanced.
Further, the
classes of Proposition~\ref{prop:halfspin} may
be realized by irreducible $(-1)$-curves and
conic fibration. The configurations
where these fail are Zariski-closed proper
subsets of $\bF_0^n$. 
\end{rema}

The recursive structure of our definitions yields the following:
\begin{prop}
There exist natural dominant forgetting morphisms
\begin{align*}
\Phi: \cC_n & \longrightarrow M_{0,n} \\
(\phi:X \ra \bP^1,E'_1,\ldots,E'_n) & \mapsto 
(\bP^1,\phi(E'_1),\ldots,\phi(E'_n))
\end{align*}
and
\begin{align*}
\Psi: \cC^{ub}_n & \longrightarrow \cC^{ub}_{n-r}, \quad r=1,\ldots, n-3 \\
(\phi:X \ra \bP^1,E'_1,\ldots,E'_n) & \mapsto
(\varphi:X' \ra \bP^1,E'_1,\ldots,E'_{n-r})
\end{align*}
where $\beta:X\ra X'$ blows down $E'_{n-r+1},
\ldots,E'_n$.
There are $\binom{n}{r}2^r$ choices for $\Psi$
depending on which fibral curves are blown down.
\end{prop}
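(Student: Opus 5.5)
We construct the two morphisms explicitly, check that they are well defined on objects and isomorphisms, show they are dominant, and then count the choices.

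\emph{The morphism $\Phi$.} Given a family $(\phi:X\ra \bP^1;E'_1,\ldots,E'_n)$ over a base $T$, the points $\phi(E'_i)$ are the images of the degenerate fibers. Since the discriminant is reduced (as in the proposition following Definition~\ref{defi:goodconic}), they form $n$ disjoint sections of $\bP^1_T$. An isomorphism of conic bundles covers an isomorphism of the base $\bP^1$ carrying $\phi(E'_i)$ to the corresponding point. So $\Phi$ is a morphism of stacks to $M_{0,n}$, which is the quotient of $n$-tuples of distinct points by $\PGL_2$.

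For dominance, given distinct $p_1,\ldots,p_n\in\bP^1$, choose $s_i\in\bF_0$ with $\pi_1(s_i)=p_i$ and generic second coordinates. Then $\Bl_{s_1,\ldots,s_n}\bF_0$ is good, and $\{E_i\}$ is an even collection since it blows down to $\bF_0$. Its image under $\Phi$ is $(p_1,\ldots,p_n)$, so $\Phi$ is in fact surjective.

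\emph{The morphism $\Psi$.} Contracting the disjoint $(-1)$-curves $E'_{n-r+1},\ldots,E'_n$ in families uses Castelnuovo's criterion relative to $T$. Because they are components of fibers, $\beta$ is a morphism over $\bP^1$. The surface $X'$ is a good conic bundle with $n-r\ge 3$ degenerate fibers, and the contracted fibers become smooth. The remaining curves $E'_1,\ldots,E'_{n-r}$ still form an even collection: blowing them down from $X'$ gives the same $\bF_r$ as blowing down all $n$ curves from $X$.

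We then check that $X'$ is uniformly balanced. Typicality and uniformity pass to $X'$ because every even collection on $X'$, together with $E'_{n-r+1},\ldots,E'_n$, is an even collection on $X$ with the same blow-down, as the text notes. Balance of $X'$ and of its further blow-downs is exactly the uniformly balanced hypothesis on $X$, which is why we restrict to $\cC^{ub}_n$. Isomorphisms respecting the ordered curves descend to isomorphisms of the contractions, so $\Psi$ is functorial.

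For dominance, use the chart $j_n$: on $\bF_0^{n-3}$ the map $\Psi$ is the coordinate projection forgetting $s_{n-r+1},\ldots,s_n$. This is dominant because the uniformly balanced locus is dense and open.

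\emph{Counting.} A choice of $\Psi$ amounts to choosing which $r$ degenerate fibers to smooth, in $\binom{n}{r}$ ways, and then which component of each to contract, in $2^r$ ways. After relabeling by the $C_2^{n-1}\rtimes\fS_n\subset W(D_n)$ action, each choice is put in the standard form above.

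The main obstacle is this last relabeling step. If an odd number of the contracted curves are switched to $E''$, the remaining collection $E'_1,\ldots,E'_{n-r}$ is no longer even in $X'$. I would handle this by letting the flip of one remaining curve compensate for the parity, which is possible since $n-r\ge 3>0$. I would check that this compensation is compatible with the target stack $\cC^{ub}_{n-r}$, whose labels must be even.
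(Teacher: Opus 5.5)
Your proposal is correct and follows the route the paper takes implicitly. The paper gives no separate argument beyond the remark that typicality, uniformity and uniform balance are stable under blowing down fibral curves; that is exactly what you verify, and you additionally use the chart $j_n$ to see $\Psi$ as a coordinate projection for dominance and handle the parity issue. Your parity compensation is right: when an odd number of $E''_i$ are contracted, you flip one surviving label, which is equivalent to precomposing with an element of $W(D_n)$. This settles a point that the paper's count of $\binom{n}{r}2^r$ choices glosses over.
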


\subsection{Small discriminant cases}
The cases $n=1,2$ are not representative as the underlying surfaces $X$ have no moduli and positive-dimensional
automorphisms.  

When
$n=3$ the moduli space consists of a single
point, albeit with non-trivial twists:
Let $\phi: X \ra \bP^1$ be a uniform conic bundle with three degenerate fibers.  Then
$X\simeq \Bl_{s_1,s_2,s_3}(\bP^1 \times \bP^1)$
where the three points are in
distinct fibers of both projections.
The fibration is isomorphic to the morphism of moduli spaces of pointed stable curves
$$\varphi_5: \oM_{0,5} \ra \oM_{0,4}$$
forgetting the last point. Its automorphism
group is permutations of the first
four points
$$\Aut(X) = \fS_4 \simeq W(D_3).$$

For $n=4$, see the proof of Proposition~\ref{prop:balancenoVF}
for classification details. Here the moduli space is two-dimensional, with non-trivial generic stabilizer:

\begin{prop} \label{prop:n4aut}
A generic conic bundle $X \ra \bP^1$ with four degenerate
fibers over an algebraically closed field $k$ 
has automorphism group $C_2^3$.
\end{prop}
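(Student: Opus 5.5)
The plan is to work with an explicit model and compute its automorphisms directly, first those over $\bP^1$ and then those moving the base. By the classification recalled in the proof of Proposition~\ref{prop:balancenoVF}, a generic $X$ with $n=4$ is $\Bl_{s_1,s_2,s_3,s_4}(\bP^1\times\bP^1)$. Here the $s_i$ are in general position: distinct images under both projections, and no three on a curve of bidegree $(1,1)$. Using $\PGL_2\times\PGL_2$ I normalize
$$s_1=(0,0),\quad s_2=(1,1),\quad s_3=(\infty,\infty),\quad s_4=(\lambda,\mu),$$
with $(\lambda,\mu)$ generic. Since there are at least three degenerate fibers and $\Gamma(X,T_X)=0$, the group $\Aut(X)$ is finite. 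Every automorphism preserves the conic bundle structure: for generic $X$ the class $f$ is the unique class of its type, and in particular it is distinguished from $f_2$, which is fixed by no symmetry swapping the two rulings. Hence $\Aut(X)$ sits in an exact sequence
$$1\ra \Aut(X/\bP^1)\ra \Aut(X)\ra \Gamma\ra 1,$$
where $\Gamma\subset\PGL_2$ is the group of automorphisms of the base preserving the four discriminant points $\phi(E_i')$.

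\emph{Step 1 (the image $\Gamma$).} For generic $\lambda$ the stabilizer of four points $\{0,1,\infty,\lambda\}$ in $\PGL_2$ is the Klein group $C_2^2$, consisting of the double transpositions. For each double transposition $\sigma$ I will exhibit a lift to $\Aut(X)$. The natural candidate is an automorphism of $\bP^1\times\bP^1$ of the form $(\sigma,\tau)$ together with elementary transformations at the degenerate fibers. Concretely, I would look for an element $w\in W(D_4)$ that permutes the blow-down configurations compatibly with $\sigma$. Such a $w$ produces a second blow-down $\beta':X\ra\bF_0$ with centers $s'_i$, and the lift exists once these centers are $\PGL_2\times\PGL_2$-equivalent to the $s_{\sigma(i)}$. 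Checking this is a dimension count: the moduli are two-dimensional, so the equivalence should hold identically in $(\lambda,\mu)$. The cleanest choice seems to be the classical involutions of the degree-$4$ del Pezzo surface. Note that $K_X^2=8-4=4$, and for generic points $X$ is a quartic del Pezzo surface, with $\Aut=C_2^4$ preserving each of its conic bundles up to the pencil. The subgroup preserving the pencil $|f|$ then needs to be identified.

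\emph{Step 2 (the kernel $\Aut(X/\bP^1)$).} An automorphism over $\bP^1$ permutes components within degenerate fibers, which is an element of $C_2^4$. It must preserve parity and $K_X$, so it lies in $W(D_4)\cap C_2^4$, the even sign changes. Beyond that, it is determined by its action on Picard, by the no-automorphism statement for uniform bundles with ordered labelled collections. So I need to decide which even sign changes are realized. Using the del Pezzo picture, $\Aut(X)=C_2^4$ acts on the pencil; the subgroup acting trivially on the base should be $C_2$, or $C_2$ together with whatever is forced. Combining with $\Gamma=C_2^2$ gives order $8$, consistent with $C_2^3$.

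\emph{Step 3 (group structure).} All elements come from the abelian group $C_2^4$ of the quartic del Pezzo surface. The stabilizer of a conic class in $C_2^4$ is an abelian $2$-group of exponent $2$, so $\Aut(X)\simeq C_2^3$. Concretely, $C_2^4$ acts on the ten conic pencils (pairs $\{C,-K-C\}$); the classes $f$ and $-K_X-f$ get swapped by half the group, and the stabilizer of $f$ has index $2$.

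The main obstacle is the precise identification of which elements preserve $f$ rather than swapping it with $-K_X-f$, and the verification that no extra automorphisms arise. I would handle this by writing $X$ as $\{Q_1=Q_2=0\}\subset\bP^4$ in diagonal form $\sum x_i^2=\sum a_ix_i^2=0$, where the sign changes give $C_2^4$. The conic pencils then correspond to the quadrics of the pencil of rank $4$ and their two rulings, and the stabilizer of a ruling is visibly of index $2$.
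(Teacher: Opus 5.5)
Your closing argument (Step 3 together with the last paragraph) is the paper's proof. You put $X$ in diagonal form $\sum x_i^2=\sum a_ix_i^2=0$ and use the $C_2^4$ of sign changes. You identify the conic classes with the two rulings of each of the five rank-four quadrics of the pencil, and observe that the stabilizer of one ruling has index two, which gives $C_2^3$. Like the paper, you take for granted that a generic quartic del Pezzo surface has exactly $\Aut(X)=C_2^4$. A small slip: there are ten conic classes forming five pairs $\{C,-K_X-C\}$, not ten pairs.

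\textbf{The opening framing is false.} Your first paragraph contradicts Step 3 and must be fixed. It is not true that every automorphism of $X$ preserves the conic bundle structure. On a quartic del Pezzo surface, $f$ is not ``the unique class of its type'': $-K_X-f$, and in fact all ten conic classes, satisfy $C^2=0$ and $-K_X\cdot C=2$. As you say yourself in Step 3, half of $C_2^4$ sends $f$ to $-K_X-f$, and such automorphisms do not commute with $\phi$ up to any automorphism of the base.

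So your exact sequence must be written for the subgroup $\Aut(X,\phi)\subset\Aut(X)$ of automorphisms compatible with $\phi$. The conclusion is that this subgroup, not $\Aut(X)$, is $C_2^3$. As written, your two claims together would force $C_2^4=C_2^3$.

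\textbf{Steps 1 and 2 prove nothing on their own.} A dimension count cannot produce lifts of the double transpositions. The kernel is left at ``should be $C_2$,'' and you have not shown which even sign changes are realized. These steps are superseded by Step 3. Their intended content is exactly the paper's geometric interpretation:
\begin{itemize}
\item The kernel is generated by the covering involution of the double cover $g_1:X\to\bP^1\times\bP^1$, obtained by projecting from the vertex of a rank-four quadric. This is the sign change of the vertex coordinate, and it swaps the two components of every degenerate fiber.
\item The Klein group acting on the base is realized by translations by $2$-torsion points of the branch curve $E_1$.
\end{itemize}
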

\begin{proof}
Recall that a quartic del Pezzo 
surface 
$$X = \{Q_0=Q_{\infty}=0 \} \subset \bP^4, \quad
Q_0,Q_{\infty} \in k[x_0,x_1,x_2,x_3,x_4]_2$$
admits a $C_2^4$ action; indeed, diagonalizing
$$Q_0=x_0^2+x_1^2+x_2^2+x_3^2+x_4^2, \quad
Q_{\infty}=a_0x_0^2+a_1x_1^2+a_2x_2^2+a_3x_3^2+a_4x_4^2$$ we may let $\pm 1$ act on each coordinate. The 
pencil $\left<Q_0,Q_{\infty}\right>$ contains five rank-four
quadrics $Q_1,\ldots,Q_5$; each gives a double cover 
$$g_i:X \ra \bP^1 \times \bP^1$$
branched over a $(2,2)$ curve $E_i$. Projecting to the factors gives conic fibrations
$$\phi_{\pm i}: X \ra \bP^1$$
with four degenerate fibers.

Which elements of $C_2^4$ are equivariant for a given
fibrations? The group acts on 
$$\{\phi_{\pm 1},\ldots, \phi_{\pm 5} \}$$ 
by switching an even number of signs; the 
stabilizer of any element is isomorphic to $C_2^3$. 
The element 
$$\phi_{+i} \mapsto \phi_{-i}, \quad i=2,3,4,5$$
corresponds to the covering involution of $g_1$.
The remaining elements, in pairs indexed by partitions
$$\{2,3,4,5\} = \{2,3\}\sqcup \{4,5\}, \{2,4\}\sqcup \{3,5\}, \{2,5\} \sqcup \{3,4\},$$
correspond to two-torsion elements of the Jacobian
$J(E_1)$ acting on $E_1$. This interpretation remains
valid over non-closed fields.
\end{proof}

\section{Relations with parabolic bundles}
\label{sect:RWPB}
\subsection{Background}
Parabolic bundles on curves were introduced in
\cite{MS}; here we focus on the special case 
of rank-two bundles on $\bP^1$ with weight $1/2$
\cite{BHK,Casagrande}.

Fix a collection of distinct points $p_1,\ldots,p_n \in \bP^1$.
We consider rank-two bundles with parabolic structure
$$\mathbf{E}=(E,L_1,\ldots,L_n),$$
consisting of a rank-two vector bundle $E \ra \bP^1$ along with
one-dimensional subspaces $L_i \subset E|_{p_i}$ for $i=1,\ldots,n$.
Its parabolic degree is
$$\pardeg(\mathbf{E}) = \deg(E) + n/2.$$ We say that $\mathbf{E}$ is even or odd based on
the parity of $\deg(E)$, i.e. of the 
Hirzebruch surface $\bF_r:=\bP(E)$.

Given a rank-one subbundle $F \subset E$, the parabolic
degree of the associated $\mathbf{F} \subset \mathbf{E}$
is 
$$\pardeg(\mathbf{F}) = \deg(F) + \frac{1}{2} |\{i: L_i = F|_{p_i} \subset E_{p_i} \}|.$$
We say $\mathbf{E}$ is {\em semistable} (resp.~{\em stable}) if
$$\pardeg(\mathbf{F}) \le \text{(resp.~$<$) } \pardeg(\mathbf{E})/2$$
for every rank-one subbundle. It is {\em unstable} if it is not semistable. These are projective notions: 
Tensoring $\mathbf{E}$ by a line bundle does not change
its stability.  
For odd $n$, stability and semistability coincide.

\subsection{Extracting conic bundles}
\begin{defi} \label{def:partoconic}
Given $\mathbf{E}\ra \bP^1$ a parabolic bundle as above, the
associated good conic bundle is defined
$$X:=\Bl_{\bP(L_1),\ldots,\bP(L_n)}\bP(E) \ra \bP^1.$$
The group $C_2^{n-1}$ acts transitively on the
even parabolic projective bundles associated with 
$\phi:X \ra \bP^1$, by elementary (or Hecke) transformations over
an even subset of $\{s_1,\ldots,s_n\}$. Changing the 
ordering on these $n$ points, we obtain an action of 
$W(D_n)$ on these even parabolic projective bundles.
\end{defi}

\begin{rema}
We made the convention in (\ref{eqn:chooseeven}) to realize
conic bundles as blowups of {\em even} Hirzebruch surfaces.
This is a good choice because $\bF_0$ has simpler automorphisms than $\bF_1$.
However, we could have used odd Hirzebruch surfaces instead.
\end{rema}

\begin{prop}Consider the moduli space of 
$$(\bP(E) \ra \bP^1, p_1,\ldots,p_n)$$
of projective bundles with parabolic structure on 
$\{p_1,\ldots,p_n\}$, distinct points on $\bP^1$.  
The quotient under our $W(D_n)$-action gives a moduli space of good conic bundles over $\bP^1$ with $n$ degenerate fibers.
\end{prop}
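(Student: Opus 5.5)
We will construct mutually inverse constructions between even parabolic projective bundles with $n$ marked parabolic points and good conic bundles equipped with an even collection of fibral components. Then we check that the $W(D_n)$-action of Definition~\ref{def:partoconic} matches the relabeling/reordering action on the stacks $\cC_n$. The result then follows by passing to $W(D_n)$-quotients, exactly as in Theorem~\ref{theo:modulistack}.

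\emph{Forward direction.} Start from $(\bP(E)\ra\bP^1, L_1,\ldots,L_n)$ with distinct $p_i$. The points $s_i=\bP(L_i)$ lie in distinct fibers. Blowing them up gives a smooth surface $X$ whose fiber over $p_i$ is the proper transform of $\bP(E_{p_i})$ together with the exceptional curve. These are two $\bP^1$'s meeting in a node, and the remaining fibers are smooth conics. So $X\ra\bP^1$ is good in the sense of Definition~\ref{defi:goodconic}, with $n$ degenerate fibers. We record the exceptional curves as $E'_1,\ldots,E'_n$. Since their blow-down is $\bP(E)\simeq\bF_r$ with $r\equiv\deg E \pmod 2$, this collection is even precisely when $\mathbf{E}$ is even (Definition~\ref{defi:parity}).

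\emph{Inverse direction.} Given $(\phi:X\ra\bP^1;E'_1,\ldots,E'_n)$ even, we blow down the $E'_i$ as in (\ref{eqn:chooseeven}) to get $\beta:X\ra\bF_r$ with $r$ even. Writing $\bF_r=\bP(E)$, the image points $\beta(E'_i)$ give lines $L_i\subset E_{p_i}$. This recovers the parabolic data up to twisting $E$ by a line bundle, which is invisible at the projective level. Both constructions are compatible with base change, so they define inverse morphisms of stacks. The isomorphisms also correspond: a parabolic isomorphism is an isomorphism of $\bP(E)$ over an automorphism of $\bP^1$ preserving the points $s_i$, which lifts uniquely to the blowups and preserves the labeled $E'_i$. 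Conversely, an isomorphism of conic bundles respecting the $E'_i$ descends through $\beta$.

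\emph{Matching the group actions.} An elementary transformation at $p_i$ blows up $s_i$ and contracts the proper transform of the fiber. On $X$ this amounts to swapping $E'_i$ and $E''_i$. Hence the $C_2^{n-1}$ of even Hecke transformations corresponds to the relabeling action on even collections. Reordering the marked points corresponds to $\fS_n$. Together these generate the signed permutations with an even number of sign changes, i.e. $W(D_n)$ as identified in Proposition~\ref{prop:getweights}. Since $W(D_n)$ acts transitively on even collections, the quotient forgets exactly the choice of even collection and ordering. It therefore parametrizes good conic bundles with $n$ degenerate fibers.

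\emph{Main obstacle.} The delicate point is making the quotient meaningful as a moduli space on a reasonable locus, as opposed to merely bijective on isomorphism classes. The parabolic moduli space is only well behaved on the (semi)stable locus. I would first restrict to stable parabolic bundles, which form an open locus with finite automorphisms. I would then check that under the correspondence this locus contains the uniform (or uniformly balanced) locus. For generic $\bF_0$ configurations, stability can be verified numerically against subbundles of bounded degree, using Proposition~\ref{prop:birat} to bound the self-intersections of sections. Finally, I would invoke Theorem~\ref{theo:modulistack} to identify the resulting quotient there.
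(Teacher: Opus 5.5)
Your proposal is correct and matches the paper's reasoning, which is simply the content of Definition~\ref{def:partoconic}. Blowing up the flag points produces the conic bundle together with a labeled even collection. Even Hecke transformations realize the $C_2^{n-1}$ relabelings, and reordering the points supplies the rest of $W(D_n)$, which acts simply transitively on ordered even tuples. Your final paragraph on stability is not needed for this statement, and restricting to stable or uniform loci would only shrink its scope; the paper treats those issues separately in Propositions~\ref{prop:stabmoduli} and \ref{prop:baltostab}.
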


\begin{defi}
A good conic bundle $\phi:X \ra \bP^1$ is {\em semistable} 
(resp.~{\em stable}) if all the associated 
even parabolic bundles $\mathbf{E} \ra \bP^1$ are 
semistable (resp.~stable).
\end{defi}

The general theory of parabolic bundles pays dividends in
our situation. Compactifications of universal moduli spaces
of parabolic bundles, over the moduli space of pointed curves,
have been constructed via GIT 
\cite{Schlueter,Pine}. These show that the moduli space
of stable parabolic bundles over the moduli space of pointed 
curves of genus zero has quasi-projective coarse moduli space. \begin{prop} \label{prop:stabmoduli}
The moduli space of stable good conic bundles with
$n$ degenerate fibers is separated with quasi-projective coarse
moduli space.
\end{prop}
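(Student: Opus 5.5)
We realize the moduli stack of stable good conic bundles as a finite quotient of an open substack of the universal moduli of stable parabolic bundles over $M_{0,n}$, and then invoke the quasi-projectivity results of \cite{Schlueter,Pine} together with the general fact recalled before Theorem~\ref{theo:modulistack}: quotients of quasi-projective varieties by finite groups are separated with quasi-projective coarse spaces.

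\emph{Step 1: the parabolic space.} Let $\mathcal{P}_n$ be the moduli of
$$(\bP^1, p_1,\ldots,p_n, \mathbf{E}),$$
where $p_1,\ldots,p_n$ are distinct points, $\mathbf{E}$ is a stable even parabolic bundle of weight $1/2$, and $\mathbf{E}$ is taken up to tensoring by line bundles, i.e.\ we work with $\bP(E)$ and the flags. By \cite{Schlueter,Pine} this space has a quasi-projective coarse moduli space over $M_{0,n}$. Stable parabolic bundles are simple, so their automorphisms are scalars. After projectivizing and fixing the marked points, the stabilizers are therefore trivial, and $\mathcal{P}_n$ is an algebraic space, indeed a quasi-projective scheme. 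It is separated because it arises as a GIT stable quotient.

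\emph{Step 2: identify conic bundles.} Definition~\ref{def:partoconic} gives a morphism $\mathcal{P}_n \to \cC_n$: blow up the points $\bP(L_i)$ and keep the labelled fibral curves $E'_i$, the exceptional divisors. Conversely, given $(\phi:X\to\bP^1;E'_1,\ldots,E'_n)$ with an even collection, blowing down yields $\bP(E)\simeq \bF_r$ with $r$ even, together with flags at the points $\phi(E'_i)$. These constructions are mutually inverse, so $\cC_n$ is identified with the moduli of even parabolic projective bundles with ordered points. Under this identification the stable locus $\cC_n^{s}$, in the sense of the preceding definition, corresponds to the locus in $\mathcal{P}_n$ where \emph{every} $C_2^{n-1}$-Hecke transform is also stable. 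This is a finite intersection of open conditions, so it gives an open subscheme $\mathcal{P}_n^{s}\subset\mathcal{P}_n$ stable under $W(D_n)$, which acts by Hecke transformations and reordering. The stack of stable good conic bundles is then $[W(D_n)\backslash \mathcal{P}^s_n]$, exactly as in Theorem~\ref{theo:modulistack}.

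\emph{Step 3: conclude.} An open subscheme of a quasi-projective scheme is quasi-projective. The quotient of a quasi-projective scheme by the finite group $W(D_n)$ exists as a quasi-projective coarse space, since finite orbits lie in affine opens, and the quotient stack is separated with finite inertia \cite[App.~1]{MustataZeta}.

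The main obstacle lies in Steps 1 and 2. One must check that the GIT constructions of \cite{Schlueter,Pine} actually produce a \emph{fine} or at least separated space for projective stable parabolic bundles with weight exactly $1/2$, where weights are non-generic when $n$ is even. One must also check that the $W(D_n)$-action, in particular the Hecke transformations, extends algebraically over the stable locus rather than just pointwise. I would first verify separatedness directly with the valuative criterion: two families of stable parabolic bundles that agree generically agree, because stable objects are simple and the standard Langton-type argument applies. Then I would deduce algebraicity of the Hecke action from its description as a blowup followed by a blowdown of a relative divisor in families.
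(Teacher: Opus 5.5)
Your proposal is correct and follows essentially the same route as the paper's proof: pass from stable even parabolic projective bundles to labelled conic bundles via Definition~\ref{def:partoconic}, take the open locus where every $C_2^{n-1}$ Hecke translate is stable, use \cite{Schlueter,Pine} for quasi-projectivity, and take the quotient by the finite group $W(D_n)$ using \cite[App.~1]{MustataZeta}. Your additions (trivial stabilizers for $n\ge 3$, openness and $W(D_n)$-invariance of the intersection of translates) only make explicit what the paper leaves implicit. One small correction: separatedness of the stable locus follows because a nonzero parabolic map between stable bundles of equal parabolic degree is an isomorphism, not from a Langton-type argument, which concerns the existence of limits.
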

\begin{proof} Let $\psi:\cP_n \ra \cC_n$ be the morphism of moduli stacks, from even stable parabolic projective bundles to good conic bundles with labeled degenerate fibers, arising from Definition~\ref{def:partoconic}.
This is equivariant under the actions of $C_2^{n-1}$ via elementary transformations on $\cP_n$ and relabelings on $\cC_n$. 
Let $\cP_n^s \subset \cP_n$ denote the stable locus and
and $\cC_n^s$ the image
$$\psi: \cap_{\tau \in C_2^{n-1}} \tau(\cP^s_n) \ra \cC_n.$$ 
This is also quasi-projective, and its quotients under
finite groups (like $W(D_n)$) remain quasi-projective\cite[App.~1]{MustataZeta}. 
\end{proof}

\begin{prop} \label{prop:baltostab}
Fix distinct points $p_1,\ldots,p_n \in \bP^1$.
Let $\mathbf{E} \ra \bP^1$ be parabolic 
of rank two and even degree with respect to these points;
write $\phi:X \ra \bP^1$ for the associated conic bundle.
If $n\ge 3$ and $\mathbf{E}$ is unstable (resp.~$n\ge 5$
and $\mathbf{E}$ is strictly semistable) then $\phi:X\ra \bP^1$ is unbalanced. 
\end{prop}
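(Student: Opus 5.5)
Write $E$ of even degree, so that after twisting we may assume $\bP(E)\simeq \bF_r$ with $r$ even. Instability (or strict semistability) of $\mathbf{E}$ produces a rank-one subbundle $F\subset E$ that is destabilizing in the parabolic sense. The plan is to turn $F$ into a section $\Sigma$ of $\phi$ with very negative self-intersection. Then Proposition~\ref{prop:birat} together with Corollary~\ref{coro:balanceforn} rules out balance: a balanced bundle has $a_2=\lfloor (n+2)/3\rfloor$, while a section with $\Sigma^2=-s$ forces $a_2\ge s$.

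\emph{Step 1: the section.} The subbundle $F$ gives a section $\Sigma_0\subset \bP(E)$, namely $\bP$ of the quotient line bundle, depending on the convention. Its self-intersection is $\Sigma_0^2 = \deg E - 2\deg F$, up to the sign convention, which I will fix so that destabilizing subbundles give negative sections. Let $m=|\{i: L_i=F|_{p_i}\}|$ be the number of blown-up points $\bP(L_i)$ lying on $\Sigma_0$. The proper transform $\Sigma\subset X$ then satisfies
$$\Sigma^2=\deg E-2\deg F-m = -2\Big(\pardeg(\mathbf F)-\tfrac12\pardeg(\mathbf E)\Big)-\tfrac n2 .$$
I will check this with $\pardeg(\mathbf F)=\deg F+m/2$ and $\pardeg(\mathbf E)=\deg E+n/2$. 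The right-hand side is $2\deg F+m-\deg E-n/2$ negated, and the pieces match.

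\emph{Step 2: the numerical contradiction.} Set $\delta=\pardeg(\mathbf F)-\pardeg(\mathbf E)/2$. Unstable means $\delta>0$, and since $2\delta\in\frac12\bZ$ with appropriate parity, in fact $\delta\ge 1/2$. Then $-\Sigma^2\ge 1+n/2$. Strictly semistable means $\delta=0$, which gives $-\Sigma^2=n/2$; here $n$ must be even, since for odd $n$ the quantity $\delta$ cannot vanish. Proposition~\ref{prop:birat} yields $a_2(\phi)\ge -\Sigma^2$. We compare this with the balanced value $\lfloor (n+2)/3\rfloor$:
\begin{itemize}
\item In the unstable case, $1+n/2>\lfloor(n+2)/3\rfloor$ for all $n\ge 1$, and certainly for $n\ge 3$.
\item In the strictly semistable case, $n/2>\lfloor (n+2)/3\rfloor$ requires $n\ge 5$; it fails at $n=4$, where $2=\lfloor 6/3\rfloor$. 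This matches the hypothesis and Example~\ref{exam:warning}.
\end{itemize}
One caveat is that $-\Sigma^2$ need only be an integer, so I should use $-\Sigma^2\ge\lceil 1+n/2\rceil$ rather than $1+n/2$.

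\emph{Main obstacle.} The delicate point is the sign and convention bookkeeping in Step 1. Which of $\bP(E)$ or $\bP(E^\vee)$ is meant, and whether $F$ corresponds to a section with self-intersection $\deg E-2\deg F$ or its negative, must be aligned so that a destabilizing subbundle produces a \emph{negative} section. I would verify this on $\bF_r=\bP(\cO\oplus\cO(-r))$, where $F=\cO$ should be destabilizing when $r$ is large.

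A secondary check is that $\Sigma^2\le -1$, so that Proposition~\ref{prop:birat} applies with $r\ge 0$. This holds automatically here. I also need that $\phi$ is good with $n$ degenerate fibers, which requires the $p_i$ to be distinct; that is part of the hypotheses.
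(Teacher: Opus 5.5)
Your strategy is the same as the paper's. You take the proper transform $\Sigma$ of $\bP(F)$ and use $\Sigma^2=\deg E-2\deg F-m$. That sign is right for the paper's convention, where $\bP(E)$ parametrizes lines, as the points $\bP(L_i)$ indicate. You then play $\Sigma^2$ against Proposition~\ref{prop:birat} and Corollary~\ref{coro:balanceforn}.

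However, your parity refinement in Step~2 is false for odd $n$. Since $\deg E$ is even, $\delta=(4\deg F-2\deg E+2m-n)/4$ has numerator of the same parity as $n$. So for odd $n$, instability only yields $\delta\ge 1/4$, hence $-\Sigma^2\ge (n+1)/2$, not $-\Sigma^2\ge 1+n/2$. The ceiling in your caveat makes this worse.

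Concretely, take $n=3$, $E=\cO\oplus\cO$, and $F=\cO$ with $F|_{p_i}=L_i$ for exactly two indices. Then $\delta=1/4$ and $\Sigma^2=-2$, whereas your bound would give $-\Sigma^2\ge 5/2$. Your remark that the unstable case gives a contradiction for every $n\ge 1$ is a symptom of the same error. For $n=1$, $\Bl_{s_1}(\bP^1\times\bP^1)$ is balanced of type $(0,0,1)$: by Proposition~\ref{prop:birat}, $\phi_*\omega_{\phi}^{-1}$ is a degree $-1$ subsheaf of $\cO^3$. Yet its parabolic bundle is unstable, since the horizontal ruling through $s_1$ gives $\delta=1/4$ and $\Sigma^2=-1$.

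The repair is to drop the refinement entirely, which is exactly what the paper does. Instability $\delta>0$ gives $-\Sigma^2>n/2\ge\lfloor (n+2)/3\rfloor$ for $n\ge 3$. Hence $a_2(\phi)>\lfloor (n+2)/3\rfloor$, and $\phi$ is unbalanced. Your treatment of the strictly semistable case is correct as written: $\delta=0$ forces $n$ even, and $n/2>\lfloor (n+2)/3\rfloor$ once $n\ge 6$.
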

The edge cases for $n=4$ are discussed in Examples~\ref{exam:warning} and \ref{exam:warning2}.
\begin{proof}
In the balanced case, Corollary~\ref{coro:balanceforn} 
gives
$$(\phi_*\omega_{\phi}^{-1})^{\vee}=
\cO_{\bP^1}(\lfloor n/3 \rfloor) \oplus
\cO_{\bP^1}(\lfloor (n+1)/3 \rfloor) \oplus
\cO_{\bP^1}(\lfloor (n+2)/3 \rfloor).$$
By Proposition~\ref{prop:birat}, sections $\Sigma \subset X$ of $\phi$ satisfy
$$\Sigma^2 \ge -\lfloor (n+2)/3 \rfloor.$$

After tensoring by a line bundle, we may assume $\deg(E)=0$.
Assume that $\mathbf{E}$ is unstable with destabilizing
subsheaf $\mathbf{F} \subset \mathbf{E}$. 
Set 
$$k=|\{i:L_i=F|p_i\}|$$
so that being unstable translates to
$$k > \frac{n}{2} - 2\deg(F).$$
Let $\Sigma \subset X$ denote the proper transform of
$\bP(F) \subset \bP(E)$. Now
$$(\bP(F))^2_{\bP(E)}= - 2\deg(F)$$
and
$$\Sigma^2 = -2\deg(F) - k < -\frac{n}{2}.$$
Combining our inequalities involving $\Sigma^2$ yields
$$\lfloor (n+2)/3 \rfloor > \frac{n}{2},$$
a contradiction for $n\ge 3$. 
In the strictly semistable case we obtain
$$\lfloor (n+2)/3 \rfloor \ge \frac{n}{2},$$
a contradiction for $n\ge 5$.
\end{proof}

\begin{exam}
Stable parabolic bundles can yield unbalanced
conic bundles. Let $n=7$ and take points
$$s_1,\ldots,s_7 \in \bP^1 \times \bP^1, 
\quad \pi_2(s_1)=\pi_2(s_2)=\pi_2(s_3),
\pi_2(s_5)=\pi_2(s_6)=\pi_2(s_7)$$
but otherwise generic. Consider the conic bundle
$$\phi:X=\Bl_{s_1,\ldots,s_7}(\bP^1\times \bP^1)
\stackrel{\pi_1}{\longrightarrow}\bP^1$$
arising from the parabolic bundle
$$\mathbf{E} = (\cO_{\bP^1} \oplus \cO_{\bP^1},
L_1,\ldots,L_7) \quad \bP(L_i)=s_i,$$
which is stable. However, we have
$$(\phi_*\omega_{\phi}^{-1})^{\vee}= 
\cO_{\bP^1}(1) \oplus \cO_{\bP^1}(3)^2$$
as $s_1,\ldots,s_7$ sit on a reducible curve of 
class $f_1+2f_2$, where $f_i$ is the fiber of $\pi_i$.
\end{exam}

Since being balanced is a Zariski-open condition, Propositions~\ref{prop:stabmoduli} and \ref{prop:baltostab} yield:
\begin{coro}
For $n\ge 4$, the moduli stack of balanced conic bundles $[W(D_n)\backslash \cC_n^b]$
is separated with quasi-projective coarse moduli space.
\end{coro}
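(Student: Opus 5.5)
The plan is to realize the balanced locus as an open substack of the stable locus, and then inherit separatedness and quasi-projectivity from Proposition~\ref{prop:stabmoduli}.

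First I show that every balanced good conic bundle with $n\ge 4$ degenerate fibers is stable in the sense defined above. Take $\phi:X\ra\bP^1$ balanced. By Definition~\ref{def:partoconic}, each even collection of fibral curves gives an even parabolic bundle $\mathbf{E}$ with $X$ as its associated conic bundle. If some such $\mathbf{E}$ were unstable, Proposition~\ref{prop:baltostab} (valid for $n\ge 3$) would make $\phi$ unbalanced. If $n\ge 5$ and some $\mathbf{E}$ were strictly semistable, the same proposition again forces $\phi$ to be unbalanced. Hence for $n\ge 5$ every associated even parabolic bundle is stable, so $\cC_n^b\subset\cC_n^s$, where $\cC_n^s$ is the stable locus from the proof of Proposition~\ref{prop:stabmoduli}. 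For odd $n$ semistability and stability coincide, so only the unstable case is ever needed there.

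The case $n=4$ is the main obstacle, because strictly semistable balanced bundles do exist there (Example~\ref{exam:warning}). My first approach is to use the $n=4$ classification in the proof of Proposition~\ref{prop:balancenoVF}. A strictly semistable $\mathbf{E}$ of degree $0$ has a subbundle $F$ of degree $0$ through two of the $L_i$; its proper transform $\Sigma$ satisfies $\Sigma^2=-2$. Balance allows at most one such section. I would then check directly that the resulting $S$-equivalence does not break separatedness: the stabilizers remain finite, since $\Gamma(X,T_X)=0$ by Proposition~\ref{prop:balancenoVF}. The quotient can then be handled by applying \cite[App.~1]{MustataZeta} to the atlas of Proposition~\ref{prop:atlas}, intersected with the open set of configurations with at most two points on a common horizontal ruling, modulo $\PGL_2\times\PGL_2$.

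If this direct treatment becomes awkward, I fall back on the explicit uniform description. For $n=4$ the balanced locus is open in $[\PGL_2\times\PGL_2\backslash\bF_0^4]$, and the action on configurations with at most two points on any ruling is proper with finite stabilizers. That gives separatedness with a quasi-projective geometric quotient by GIT for a suitable linearization.

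Granting $\cC_n^b\subset\cC_n^s$ for $n\ge 5$, the rest is formal. Being balanced is Zariski-open (stated after the splitting-type definition), so $\cC_n^b$ is an open substack of $\cC_n^s$. It is $W(D_n)$-stable because balance is a property of $X$ alone, not of the chosen collection. Open substacks of a separated stack are separated. The coarse space of $\cC_n^s$ is quasi-projective by the proof of Proposition~\ref{prop:stabmoduli}, and an open substack has as coarse space an open subset of it, which is again quasi-projective. Finally, the quotient by the finite group $W(D_n)$ stays separated with quasi-projective coarse space by \cite[App.~1]{MustataZeta}, as already used in the proof of Proposition~\ref{prop:stabmoduli}.
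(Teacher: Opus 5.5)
For $n\ge 5$ your argument is the paper's, and it is fine: Proposition~\ref{prop:baltostab} puts every balanced bundle in the stable locus, and separatedness and quasi-projectivity are inherited from Proposition~\ref{prop:stabmoduli} through openness of balance and the finite quotient by $W(D_n)$. The $n=4$ part has a genuine gap, and it cannot be closed. Finite stabilizers (that is, $\Gamma(X,T_X)=0$) give at most a Deligne--Mumford stack. They say nothing about properness of the diagonal, and the S-equivalence you flag is exactly what breaks separatedness. Your fallback claim, that $\PGL_2\times\PGL_2$ acts properly on configurations with at most two points on each horizontal ruling, is false.

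Here is a counterexample. Fix distinct $a_1,\dots,a_4\in\bP^1$ and, over a disc $D$ with parameter $\epsilon$, set
\[
s(\epsilon)=\bigl((a_1,0),(a_2,\epsilon),(a_3,1),(a_4,\infty)\bigr),\qquad g(\epsilon)=\bigl(\mathrm{id},\ y\mapsto y/\epsilon\bigr).
\]
Then $s(\epsilon)\to\bigl((a_1,0),(a_2,0),(a_3,1),(a_4,\infty)\bigr)$, and
\[
g(\epsilon)s(\epsilon)=\bigl((a_1,0),(a_2,1),(a_3,1/\epsilon),(a_4,\infty)\bigr)\longrightarrow\bigl((a_1,0),(a_2,1),(a_3,\infty),(a_4,\infty)\bigr),
\]
while $g(\epsilon)$ diverges. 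Blowing up these sections of $\bF_0\times D$ gives two families of labeled good conic bundles, isomorphic over $\epsilon\neq 0$ via $g(\epsilon)$.

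\begin{itemize}
\item All fibers are balanced. For $n=4$, balance is equivalent to $h^0(\phi_*\omega_\phi^{-1})=0$, i.e.\ no bidegree $(0,2)$ form vanishes at the four points. This holds whenever there are three distinct $y$-values, which is the situation of Example~\ref{exam:warning}.
\item $g(\epsilon)$ does not extend. An extension across $\epsilon=0$ would carry exceptional divisors to exceptional divisors, since they are closures of their restrictions to $\epsilon\neq 0$. It would therefore descend to some $(h_1,h_2)\in\PGL_2\times\PGL_2$ with $h_2(0)=0$ (from index $1$) and $h_2(0)=1$ (from index $2$), which is impossible.
\end{itemize}

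Hence $\cC_4^b$, and with it $[W(D_4)\backslash\cC_4^b]$, is not separated: the $n=4$ case of the statement is false as written. The paper's own justification for $n=4$ is a pointer to the proof of Proposition~\ref{prop:balancenoVF}. That yields only vanishing of vector fields, so it has the same defect. What your approach does prove at $n=4$ is the statement for the stable locus, which there consists of the balanced bundles with no section of self-intersection $-2$.

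There is also a smaller inaccuracy: $\cC_4^b$ is not an open substack of $[\PGL_2\times\PGL_2\backslash\bF_0^4]$. A balanced $X$ can carry even collections that blow down to $\bF_2$; in Example~\ref{exam:warning}, the collection $f_1-E_1,f_1-E_2,E_3,E_4$ does so.
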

The $n=4$ case follows from the proof of Proposition~\ref{prop:balancenoVF}.

\begin{rema}
Moduli spaces of conic bundles over general curves
have also been studied in \cite{GSconic}. 
They develop formulates of 
stability grounded in slope-stability of vector
bundles.  Our approach gives very special instances
of these spaces -- even restricting to genus zero. 
\end{rema}

\subsection{Linear subspaces on complete intersections of quadrics}
Here we follow \cite{Casagrande}. Let $n=2g+1$ and consider
smooth complete intersections of quadrics 
$$Z = \{Q_1=Q_2=0 \} \subset \bP^{2g}$$
with pencil 
$$\cQ=\{t_1Q_1+t_2Q_2=0\} \ra \bP^1_{[t_1,t_2]}.$$
Write $p_1,\ldots,p_{2g+1} \in \bP^1$ for the singular members of the pencil.

Recall (see, for example, \cite[\S 2]{HKT}) that the primitive cohomology 
$$H^{2g-2}(Z,\bZ)_{\mathrm{prim}}$$ 
has $W(D_{2g+1})$ symmetry. Furthermore, the finite variety
$F_{g-1}(Z)$,
parametrizing $(g-1)$-dimensional projective 
subspaces isotropic for the pencil, also has this symmetry. 

Let $F_{g-2}(Z)$ denote the variety of $(g-2)$-dimensional
subspaces isotropic for $\cQ$. We have
$$\dim(F_{g-2}(Z))=2g-2=n-3.$$
Casagrande shows it is isomorphic to the moduli space of
stable bundles of rank two and degree zero, with parabolic
structure at $p_1,\ldots,p_{2g+1}$ of weight $1/2$.

Given $\bP(\Lambda) \in F_{g-2}(Z)$, we construct
a conic bundle $X \ra \bP^1_{[t_1,t_2]}$ with 
degenerate fibers over $p_1,\ldots,p_{2g+1}$.
Let $q$ be a non-degenerate quadratic form in $n$ variables over a field $K$ and $\Lambda$ an isotropic $K$-subspace of dimension $m$. Then $q|\Lambda^{\perp}$ is degenerate along $\Lambda$, inducing a 
non-degenerate quadratic form $q'$ on $\Lambda^{\perp}/\Lambda$, which has dimension $n-2m$. We say that $q'$ is the {\em Witt reduction} of $q$ for $\Lambda$. 

We apply this for $K=k(\bP^1)$, $q$ the generic fiber
of $\cQ \ra \bP^1$, $\Lambda$ the constant isotropic subspace.
Since $n=2g+1$ and $m=g-1$, the
Witt reduction is a ternary quadratic form. We take
$X\ra \bP^1$ to be the associated conic bundle.  

\section{Moduli of complete intersections}
\label{sect:MCI}

We follow the trichotomy presented in Example~\ref{exam:types},
giving complete-intersection descriptions of balanced 
good conic bundles.  

\subsection{Type (a,a,a):}
Consider $X=\{G=0\} \subset \bP^1 \times \bP^2$
smooth of bidegree $(a,2)$, of splitting type $(a,a,a)$
with $a\ge 1$.
The GIT quotient
$$ 
(\SL_2 \times \SL_3) \backslash \! \backslash \bP(\Gamma(\bP^1 \times \bP^2, \cO_{\bP^1\times \bP^2}(a,2)))
$$
is a birational model for balanced conic bundles
with $3a$ degenerate fibers. 
\begin{prop}
Smooth surfaces of this type have finite reduced
automorphism groups and are GIT stable. The resulting 
moduli stack $[W(D_{3a}) \backslash \cC^b_{3a}]$ for $a\ge 1$ is Deligne-Mumford
with quasi-projective coarse moduli space.
\end{prop}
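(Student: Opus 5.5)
Three things need to be shown for smooth $X=\{G=0\}\subset \bP^1\times\bP^2$ of bidegree $(a,2)$: the automorphism group is finite and reduced, $G$ is GIT stable for $\SL_2\times\SL_3$, and the resulting quotient stack is Deligne--Mumford with quasi-projective coarse space.

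\textbf{Finiteness and reducedness.} First I will check that such an $X$ is a balanced good conic bundle with $n=3a$ degenerate fibers. By Example~\ref{exam:types}(1) it has splitting type $(a,a,a)$, and Proposition~\ref{prop:disc} then gives discriminant degree $3a$. For $a\ge 1$ we have $n\ge 3$, so Proposition~\ref{prop:balancenoVF} gives $\Gamma(X,T_X)=0$. The stabilizer of $[G]$ in $\PGL_2\times\PGL_3$ acts faithfully on $X$, because $X$ spans the fibers of the $\bP^2$-bundle. So the Lie algebra of this stabilizer injects into $\Gamma(X,T_X)$ and therefore vanishes. Hence the stabilizer group scheme is finite and reduced; this argument is characteristic-free, since it uses tangent spaces and not analytic methods. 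The same holds in $\SL_2\times\SL_3$ up to the finite center.

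\textbf{GIT stability.} Here I would use the standard principle that the orbit of a point with finite stabilizer is closed in the semistable locus once the point is semistable. So it suffices to prove semistability of smooth $G$; stability then follows from closedness of the orbit together with the finite stabilizer. For semistability I will apply the Hilbert--Mumford criterion to one-parameter subgroups of $\SL_2\times\SL_3$. After a change of coordinates these are diagonal, with weights $(w,-w)$ on $t_0,t_1$ and $(u_0,u_1,u_2)$ on $x_0,x_1,x_2$, where $u_0+u_1+u_2=0$ and $u_0\ge u_1\ge u_2$. If $G$ is unstable for such a subgroup, then every monomial $t^\alpha x^\beta$ appearing in $G$ has strictly positive total weight. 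I will show that this forces a singular point of $X$, most likely at $([1{:}0],[1{:}0{:}0])$ or along the line $\{x_0=0\}$ over a special fiber, by checking that $G$ and its partial derivatives all vanish there. This is the classical pattern used for smooth hypersurfaces in projective space (Mumford), and it has been adapted to bidegree hypersurfaces in products.

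\textbf{Main obstacle.} The delicate step is the combinatorics of this weight inequality. The weights of $t^\alpha$ range over $w(\alpha_0-\alpha_1)$ with $\alpha_0+\alpha_1=a$, and they must be balanced against the quadratic weights $u_i+u_j$. I would first try the point $([1{:}0],[1{:}0{:}0])$. Vanishing of $G$ there requires that the monomial $t_0^a x_0^2$ have nonpositive weight, and vanishing of the first derivatives requires the same for the monomials $t_0^{a-1}t_1x_0^2$ and $t_0^ax_0x_j$. If the sign pattern of $w,u$ does not work out, I will replace the point by $([1{:}0],[0{:}0{:}1])$ or by another corner of the weight region, splitting into cases on the signs of $w$ and $u_1$.

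\textbf{The moduli stack.} Smooth members form an open subset of the stable locus, so their quotient is Deligne--Mumford with quasi-projective coarse space; the geometric quotient of the stable locus is quasi-projective. To identify this quotient with $[W(D_{3a})\backslash \cC^b_{3a}]$, I will use that every balanced good conic bundle with $3a$ degenerate fibers embeds via $\omega_\phi^{-1}$ into $\bP(\cO(a)^3)\simeq\bP^1\times\bP^2$ as a divisor of bidegree $(a,2)$, by Corollary~\ref{coro:balanceforn} and the embedding proposition. This embedding is unique up to $\Aut(\bP^1)\times\GL_3$. Finally, passing from the stack $\cC^b_{3a}$ with labeled data to the unlabeled moduli stack is the finite $W(D_{3a})$-quotient. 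Quasi-projectivity survives this quotient by \cite[App.~1]{MustataZeta}.
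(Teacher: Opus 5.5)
Your finiteness argument (Proposition~\ref{prop:balancenoVF} applied to type $(a,a,a)$ with $n=3a\ge 3$) agrees with the paper, and so does your identification of the quotient with $[W(D_{3a})\backslash \cC^b_{3a}]$. The genuine gap is the passage from semistable to stable.

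The principle you invoke is that a semistable point with finite stabilizer has closed orbit in the semistable locus. This is false. The binary quartic $x^2y(x-y)$ is $\SL_2$-semistable with finite stabilizer. Yet $\mathrm{diag}(s,s^{-1})$ degenerates it as $s\to 0$ to $x^2y^2$, which is semistable with stabilizer containing $\bG_m$. Stability actually requires that every point in the closure of the orbit of $[G]$ inside the semistable locus still has finite stabilizer. Nothing in your argument rules out a smooth $G$ degenerating, along a one-parameter subgroup with $\mu(G,\lambda)=0$, to a singular surface with positive-dimensional stabilizer.

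Separately, your semistability step is only a plan, and it cannot be settled by testing the torus-fixed points you list. Take $a=1$ and weights $w=0$, $u=(2,-1,-1)$. A destabilized $G$ then has the form $x_0M$ with $M$ linear in the $x_i$. For generic $M$, its singular locus $\{x_0=M=0\}$ contains none of the six torus-fixed points.

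The paper closes this gap, and avoids the weight combinatorics entirely, by following \cite[ch.~4, \S 2]{GIT}:
\begin{enumerate}
\item The discriminant $\Delta$ is an irreducible invariant hypersurface in $\bP(\Gamma(\cO_{\bP^1\times\bP^2}(a,2)))$. Since $\SL_2\times\SL_3$ has no nontrivial characters, its defining equation is an invariant form.
\item Smooth surfaces are exactly the points of the invariant affine open set $\{\Delta\neq 0\}$, so they are semistable at once.
\item Every point of this open set is smooth, so by your vector-field argument it has finite stabilizer. Hence all orbits in it have dimension $\dim(\SL_2\times\SL_3)$ and are closed in it.
\item Because $\Delta$ is invariant, any semistable point in the closure of such an orbit maps to the same point of the quotient, so it lies in $\{\Delta\neq 0\}$.
\end{enumerate}
This is exactly Mumford's definition of a stable point.

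A minor point for the stack: use $\PGL_2\times\PGL_3$ rather than $\SL_2\times\SL_3$. The center acts trivially, and in characteristic $3$ its factor $\mu_3$ is non-reduced, which would spoil the Deligne--Mumford property.
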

Its proof follows the strategy of \cite[ch.~4, {\S}2]{GIT}
and \cite{BenoistJLMS}.  
The discriminant divisor 
$$\Delta \subset \bP(\Gamma(\bP^1 \times \bP^2, \cO(a,2)))$$
parametrizes singular surfaces. It is irreducible,
in fact, birational to a projective bundle over $\bP^1 \times \bP^2$; indeed, for $p \in \bP^1 \times \bP^2$, the
surfaces singular at $p$ are a codimension-four
linear subspace in the parameter space.
Since $\Delta$ is invariant under the group action,
the smooth surfaces are at least GIT semistable. 
Smooth $X \subset \bP^1
\times \bP^2$ of bidegree $(a,2)$ ($a>0$) are balanced.
By Proposition~\ref{prop:balancenoVF}, it has no global vector fields.

\subsection{Type (a,a+1,a+1):}
Consider 
$$X=\{F=G=0\} \subset \bP^1 \times \bP^3$$
be a smooth complete intersection of forms of bidegree $(1,1)$
and $(a,2)$ for $a\ge 1$. Now $F$ may be put in standard form: There are independent linear forms 
$L_1$ and $L_2$ on $\bP^3$ so that
$$F=L_1t_1+L_2t_2$$
and
$$Y=\{F=0\}\simeq \bP(\cO_{\bP^1}(-1) \oplus \cO_{\bP^1}^2).$$
In particular, $X$ is necessarily balanced and 
$\Gamma(X,T_X)=0$ by Proposition~\ref{prop:balancenoVF}.

Following \cite{BenoistJEMS},
these may be compactified by 
$$\bP(\cF) \ra \bP^7,$$
where $\cF \ra \bP^7$ is locally-free of rank $6a+10$
over the open locus of standard $F$.
We interpret 
$$[F] \in \bP(\Gamma(\cO_{\bP^1\times \bP^3}(1,1)))=\bP^7$$
and 
$$[G] \in \bP\left(\Gamma(\cO_{\bP^1 \times \bP^3}(a,2))/[F]\cdot
\Gamma(\cO_{\bP^1 \times \bP^3}(a-1,1))\right).$$
However, unlike in the previous case there is no
canonical way to linearize a GIT problem whose
stable locus contains all smooth $X$. See 
\cite[\S 2]{BenoistJEMS} and \cite{Benoistcodim2}
for discussion of the birational geometry.

\subsection{Type (a,a,a+1):}
Consider 
$$X=\{F_1=F_2=G=0\} \subset \bP^1 \times \bP^4$$
a smooth complete intersection of two forms of bidegree
$(1,1)$ and one of bidegree $(a-1,2)$. This is more
complicated than the previous cases.

\begin{rema} \label{rema:noKoszul}
The formalism of complete intersections is poorly adapted to this case. Assuming the forms $F_1$ and $F_2$
are generic, we may choose coordinates so that
\begin{equation} \label{eqn:generic11}
F_1 = L_{11}t_1 + L_{12}t_2, \quad 
F_2 = L_{21}t_1 + L_{22}t_2
\end{equation}
so that the image of
$$Y = \{F_1=F_2=0\}$$
in $\bP^4$ satisfies
$$\pi_2(Y) = \{L_{11}L_{22} = L_{12}L_{21} \}.$$
This relation is not in the ideal $\left<F_1,F_2\right>$
but is in its saturation with respect to
$\left<t_1,t_2\right>$. We obtain an alternate
resolution to (\ref{koszul:Y}):
\begin{align*} 0 \ra \cO_{\bP^1 \times \bP^4}(-1,-2)^2 
\ra \cO_{\bP^1\times \bP^4}(-1,-1)^2 \oplus 
\cO_{\bP^1\times \bP^4}(-2,-2) & \\
\ra \cO_{\bP^1\times \bP^4} \ra \cO_Y \ra &0,
\end{align*}
which is not a Koszul complex. 
\end{rema}

\begin{rema}
We cannot hope for vanishing of vector fields, as in Proposition~\ref{prop:balancenoVF},
for all such $X$. Recall Example~\ref{exam:unbalanceP4},
where 
$$Y_0=\{L_1t_1-L_2t_2=L_2t_1-L_3t_2=0\}$$
has extra automorphisms. When $a=1$ and $X\subset Y_0$, we have
$\Gamma(T_X)\neq 0$ even when $X$ is smooth. Here the
image in $\bP^4$
$$\pi_2(X) = \{L_1L_3-L_2^2 = G(L_1,\ldots,L_5)=0\}$$
generally has two ordinary
singularities on the line $L_1=L_2=L_3=0$. The morphism
$X\ra \pi_2(X)$ resolves these singularities; explicitly
$$X=\Bl_{s_1,s_2,s_3,s_4}(\bP^1\times \bP^1),\quad
\pi_2(s_1)=\pi_2(s_2), \quad \pi_2(s_3)=\pi_2(s_4).$$
The group $\bG_m$ acts on $X$, i.e., automorphisms of the 
second $\bP^1$ fixing the two distinguished points.
\end{rema}

\begin{ques}
Do  $[W(D_{3a+1}) \backslash \cC^b_{3a+1}]$ and
$[W(D_{3a+2})\backslash \cC^b_{3a+2}]$, for $a\ge 1$, 
have explicit constructions via complete intersections?
\end{ques}

\bibliographystyle{alpha}
\bibliography{conicmoduli}

\end{document}